\author{Hongliang Lai and Dexue Zhang}
\thanks{We would like to thank the support of the National Natural Science Foundation of China (No. 11771310  and No. 11871358).}
\address{School of Mathematics, Sichuan University, Chengdu 610064, China}
\title{Completely distributive enriched categories are not always continuous}
\keywords{Enriched category, continuous t-norm, forward Cauchy weight, distributive law, completely distributive quantale-enriched category,
continuous quantale-enriched category}
\newtheorem{thm}{Theorem}
\newtheorem{lem}{Lemma}
\newtheorem{prop}{Proposition}
\newtheorem{cor}{Corollary}
\DeclareMathOperator{\with}{\&}
\DeclareMathOperator{\colim}{colim}
\def\oto{{\bfig\morphism<180,0>[\mkern-4mu`\mkern-4mu;]\place(86,0)[\circ]\efig}}
\newcommand\thda{\mathrel{\rotatebox[origin=c]{-90}{$\twoheadrightarrow$}}}
\newcommand{\da}{\downarrow}
\newcommand{\ra}{\rightarrow}
\newcommand{\bv}{\bigvee}
\newcommand{\bw}{\bigwedge}
\newcommand{\vd}{\vdash}
\newcommand{\dv}{\dashv}
\newcommand{\Ga}{\Gamma}
\newcommand{\si}{\sigma}
\newcommand{\vep}{\varepsilon}
\newcommand{\CC}{\mathcal{C}}
\newcommand{\CE}{\mathcal{E}}
\newcommand{\CP}{\mathcal{P}}
\newcommand{\CT}{\mathcal{T}}
\newcommand{\sQ}{\mathsf{Q}}
\newcommand{\sX}{{\sf X}}
\newcommand{\se}{{\sf e}}
\newcommand{\sm}{{\sf m}}
\newcommand{\sy}{{\sf y}}
\newcommand{\sfs}{\mathsf{s}}
\newcommand{\ssd}{{\sfs^\dag}}
\newcommand{\CPd}{{\CP^{\dag}}}
\newcommand{\QCat}{\sQ\text{-}{\sf Cat}}
\newcommand{\QDist}{\sQ\text{-}\mathsf{Dist}}
\newcommand{\PdA}{\CPd A}
\newcommand{\syd}{\sy^{\dag}}
\newcommand{\pleq}{\sqsubseteq}
\newcommand{\wti}{\widetilde}
\newcommand{\QInf}{\sQ\mbox{-}{\sf Inf}}
\newcommand{\QSup}{\sQ\mbox{-}{\sf Sup}}
\begin{document}

\maketitle
\begin{abstract}
 In contrast to the   fact that every completely distributive lattice is necessarily continuous in the sense of  Scott, it is shown that complete distributivity of a category enriched over the closed category obtained by endowing the  unit interval with a continuous t-norm does not imply its continuity in general. Necessary and sufficient conditions for the implication   are presented.
\end{abstract}


\section{Introduction}

Preordered sets are  often viewed as thin categories, and the other way around, categories have also been studied as ``generalized ordered structures''. Illuminating examples include the study of continuous categories in \cite{Adamek2003,Johnstone-Joyal} and  that of completely (totally) distributive  categories   in   \cite{Lucyshyn-Wright2012,Marmolejo2012}.  A bit more generally, categories enriched over a monoidal closed category can be viewed as ``ordered sets'' with \emph{truth-values}  taken in that closed category  \cite{Lawvere1973}. This point of view  has led to a theory of \emph{quantitative domains},  of which the core objects   are categories enriched in a quantale, see e.g.  \cite{Bonsangue1998,Flagg1996,Hofmann2011a,Hofmann2012,
KW2011,Wagner1994}.

Continuous dcpos  \cite{Gierz2003}  are characterized by the relation between  a poset $P$ and the poset  $\mathsf{Idl}(P)$ of ideals of $P$. For all $p\in P$, $\da\!p=\{x\in P\mid x\leq p\}$ defines an embedding
$\da\thinspace:P\to\mathsf{Idl}(P).$   A poset $P$ is   directed complete if $\da$ has a left adjoint $$\sup:\mathsf{Idl}(P)\to P $$ and   is continuous if there is a string of adjunctions
$$\thda\dv\sup\dv\thinspace\da\thinspace:P\to\mathsf{Idl}(P).$$

In a locally small category $\CE$, ind-objects (or equivalently, filtered colimits of representable  presheaves) play the role of ideals in posets. Let $\mathsf{Ind}\mbox{-}\CE$ be the category of all  filtered colimits of representable presheaves on $\CE$. Then, $\CE$ has   filtered colimits if the Yoneda embedding $\sy:\CE\to\mathsf{Ind}\mbox{-}\CE$ has a left adjoint
$$\colim:\mathsf{Ind}\mbox{-}\CE\to\CE $$ and it is further continuous if  there is a string of adjunctions $${\sf w}\dv\colim\dv\sy:\CE\to\mathsf{Ind}\mbox{-}\CE.$$

For   categories enriched in a commutative and unital quantale $\sQ$, forward Cauchy weights (i.e., presheaves generated by forward Cauchy nets, see Definition \ref{forward cauchy})  play the role of ideals. For each $\sQ$-category $A$, let $\CC A$ be the $\sQ$-category of all forward Cauchy weights of $A$. Then, $A$ is   forward Cauchy cocomplete if the  $\sQ$-functor $\se_A:A\to\CC A$, which is obtained by restricting the codomain of the enriched Yoneda embedding,   has a left adjoint
$${\sup}_A: \CC A\to A; $$
and $A$  is   continuous  if it is \emph{separated} (for definition see below) and there is a string of adjoint $\sQ$-functors $${\sf t}_A\dv{\sup}_A\dv\se_A:A\to\CC A.$$

If we replace,  in the definition of continuous lattice,   $\mathsf{Idl}(P)$ by the poset of all lower  sets,  then we obtain  the concept of (constructively) completely distributive lattices \cite{FW1990,Wood}. Similarly, if we replace, respectively, the category  of ind-objects and the $\sQ$-category  of forward Cauchy weights by the category of all small presheaves and the $\sQ$-category of all weights, then we  obtain the concepts of completely distributive categories \cite[Remark 4.7]{Marmolejo2012} and completely distributive $\sQ$-categories \cite{Stubbe2007,PZ15}.

\begin{center}
\begin{tabular}{l l l }
\hline
\{0,1\}  & $\sQ$  & {\sf Set} \\
\hline
preordered sets   & $\sQ$-categories  & categories  \\
lower sets  & weights & small  presheaves\\
directed lower sets & forward Cauchy weights & ind-objects \\

continuous dcpos & continuous $\sQ$-categories & continuous categories \\
c.d. lattices    &   c.d. $\sQ$-categories  & c.d. categories  \\
\hline
\end{tabular}
\end{center}

It is well-known in order theory that a completely distributive lattice  is necessarily continuous, see e.g. \cite{Gierz2003}. This paper investigates whether there is an enriched version of this conclusion. It turns out that it depends on the structure of the \emph{truth-values}, i.e., the structure of the closed category.  The main result, Theorem \ref{main},  shows that if $\sQ$ is the interval $[0,1]$  equipped with a continuous t-norm $\&$, then all completely distributive $\sQ$-categories are continuous if and only if  the corresponding implication (see below) \(\ra:[0,1]^2\to[0,1]\) is continuous at every point off the diagonal $\{(x,x)\mid x\in[0,1]\}$. 

\section{Complete quantale-enriched categories}

This section recalls some basic notions and results about quantale-enriched categories (which are a special case of quantaloid-enriched categories \cite{Stubbe2005}) that will be needed.

A commutative and unital  quantale (a quantale for short) \cite{Rosenthal1990} is a small, complete and symmetric monoidal closed category. Explicitly, a quantale
\[\sQ=(\sQ,\with,k)\]
is a commutative monoid with $k$ being the unit, such that the underlying set $\sQ$ is a complete lattice  and the multiplication $\with$ distributes over arbitrary joins. The unit $k$  need not be the top element in $\sQ$. If it happens that $k$ is the top element, then $\sQ$ is said to be \emph{integral}. The  multiplication $\&$ determines a binary operator $\ra$, often called the implication  corresponding to $\&$, via the adjoint property:
\[p\with q\leq r\iff q\leq p\ra r.\]

Let \(\sQ=(\sQ,\with,k)\) be   quantale.  A \emph{$\sQ$-category} consists of a set $A$ and a map $a:A\times A\to \sQ$  such that
 \[k\leq a(x,x)\quad \mbox{and}\quad
a(y,z)\with  a(x,y)\leq a(x,z)\]
 for all $x,y,z\in A$. We often write $A$ for the pair $(A, a)$ and $A(x,y)$ for $a(x,y)$ if no confusion would arise.

For a $\sQ$-category $A$, the underlying preorder $\pleq$ on $A$ refers to the preorder given by
$$x\pleq y\iff k\leq A(x,y).$$ Two elements $x,y$ in a $\sQ$-category $A$ are said to be isomorphic if $A(x,y)\geq k$ and $A(y,x)\geq k$. A $\sQ$-category $A$ is \emph{separated} if isomorphic elements of $A$ are necessarily identical. It is clear that $A$ is separated if and only if $(A,\pleq)$ is a partially ordered set.

\begin{exmp}   \label{d_L}  $(\sQ,d_L)$ is a separated $\sQ$-category, where $d_L(x,y)=x\ra y$  for all $x,y\in\sQ$.  \end{exmp}

A \emph{$\sQ$-functor} $f: A\to B$ between $\sQ$-categories is a map $f:A\to B$ such that $$A(x,y)\leq B(f(x),f(y))$$
for all $x,y\in A$. With the pointwise order between $\sQ$-functors inherited from $B$, i.e.,
$$f\leq g:A\to B \iff\forall x\in A,\ f(x)\pleq g(x),$$
$\sQ$-categories and $\sQ$-functors constitute  a  locally ordered category \[\QCat.\]

A \emph{$\sQ$-distributor} $\phi:A\oto B$ between $\sQ$-categories is a map $\phi:A\times B\to \sQ$ such that
\[B(y,y')\with \phi(x,y)\with  A(x',x)\leq\phi(x',y')\] for all $x,x'\in A$ and $y,y'\in B$. The composition of $\sQ$-distributors $\phi:A\oto B$ and $\psi:B\oto C$ is given by
\[\psi\circ\phi:A\oto C, \quad\psi\circ\phi(x,z)=\bv_{y\in B}\psi(y,z)\with \phi(x,y).\]
$\sQ$-categories and $\sQ$-distributors constitute a locally ordered category \[\QDist\] with   local order inherited from $\sQ$.

Each $\sQ$-functor $f: A\to B$ induces distributors
\[f_{*}(x,y)=B(f(x),y): A\oto B\quad\text{and}\quad f^{*}(y,x)=B(y,f(x)): B\oto A,\]
called respectively the \emph{graph} and \emph{cograph} of $f$.

Let $f: A\to B$ and $g:B\to A$ be $\sQ$-functors. We say $f$ is left adjoint to $g$ (or, $g$ is   right adjoint to $f$), and write $f\dashv g$, if
\[1_A\pleq g\circ f\quad\mbox{and}\quad f\circ g\pleq 1_B.\] It is easily seen that $f$ is left adjoint to $g$ if and only if \[f_*=g^*,\] i.e., $B(f(x),y)=A(x,g(y))$ for all $x\in A$ and  $y\in B$.

With $\star$ denoting the singleton $\sQ$-category with only one object and $a(\star,\star)=k$, $\sQ$-distributors of the form $\phi:A\oto\star$ are  called  \emph{weights}  (or, \emph{presheaves}) of $A$. The weights of $A$ constitute a $\sQ$-category $\CP A$ with
\[\CP A(\phi,\rho)=\bw_{x\in A}\phi(x)\ra\rho(x).\]
  Dually, $\sQ$-distributors  of the form $\psi:\star\oto A$ are called \emph{coweights} (or, \emph{copresheaves}) of $A$. The coweights of $A$ constitute a $\sQ$-category $\CPd A$ with
\[\CPd A(\psi,\si)=\bw_{x\in A}\si(x)\ra\psi(x).\]

For any $\sQ$-category $A$, the underlying order on $\CP A$ coincides with the local order in $\QDist$, while the underlying order on $\PdA$ is  \emph{opposite} to the local order in $\QDist$, i.e.,
$$\psi\pleq\si\ \text{in}\ \CPd A\iff\si\leq\psi\ \text{in}\ \QDist.$$

Each $\sQ$-functor $f:A\to B$ induces two $\sQ$-functors
\[\CP f:\CP A\to\CP B,\quad\CP f(\phi)=\phi\circ f^*\] and \[
\CPd f:\CPd A\to\CPd B,\quad\CPd f(\psi)=f_*\circ\psi.
\]
Both $\CP$ and $\CPd$ are endofunctors on $\QCat$. The functor $\CP$ can be made   into a monad $(\CP,\sfs,\sy)$, called  the \emph{presheaf monad}, with   unit   given by the Yoneda embedding
\[
\sy_A:A\to\CP A,\quad \sy_A(x)=A(-,x)
\]
and   multiplication   given by
\[ \label{s_def}
\sfs_A:\CP\CP A\to\CP A,\quad \sfs_A(\Phi)=\Phi\circ (\sy_A)_*:  A\oto \CP A\oto\star.
\]
Similarly, the functor $\CPd$ can also be made into a monad $(\CPd, \sfs^\dagger,\sy^\dagger)$, called  the \emph{copresheaf monad}, with unit $\sy^\dagger$   given by the co-Yoneda embedding
 \[
 \syd_A:A\to\CPd A,\quad \syd_A(x)=A(x,-)
 \]
 and   multiplication $\ssd$   given by
\[
\sfs_A^\dagger:\CPd\CPd A\to\CPd A,\quad \sfs_A^\dagger(\Psi)=(\syd_A)^*\circ\Psi:\star\oto\CPd A\oto A.
\]

The presheaf monad $(\CP,\sfs,\sy)$ is a KZ-doctrine
and the copresheaf monad $(\CPd,\ssd,\sy^\dagger)$ is a co-KZ-doctrine on the locally ordered category $\QCat$.\footnote{A  monad $(\CT,\sm,\se)$ on a locally ordered category $\sX$ is a KZ-doctrine (co-KZ-doctrine, resp.)  \cite{Monoidal top,Kock1995,Zo76} if $\CT$ is a 2-functor, and for each object  $A$ of $\sX$, there is a string of adjoint arrows \[\CT\se_A\dv\sm_A\dv\se_{\CT A}:\CT A\to\CT\CT A \quad ( \CT\se_A\vd\sm_A\vd\se_{\CT A}:\CT A\to\CT\CT A\text{, resp.}).\]   The latter condition is equivalent to \[\CT\se_A\leq\se_{\CT A} \quad (\CT\se_A\geq\se_{\CT A}\text{, resp.}) \] for each object  $A$ of $\sX$. }

Let $A$ be a $\sQ$-category and $\phi$ be a weight of $A$. An element $x$ of $A$ is called a \emph{supremum} of $\phi$ if for all $y\in A$,
\[A(x,y)=\CP A(\phi,\sy (y))=\bw_{z\in A}(\phi(z)\ra A(z,y)).\] In the terminology of category theory, the element $x$ is a \emph{colimit} of the identity functor $A\to A$ weighted by   $\phi$. However,  following the tradition of order theory, we call it a \emph{supremum of $\phi$} and denote it by \emph{$\sup_A\phi$}. Supremum of a weight $\phi$,   when exists, is unique   up to isomorphism.  We say that $A$ is \emph{cocomplete} \cite{Stubbe2005} if the Yoneda embedding   $\sy_A:A\to\CP A$ has a left adjoint, ${\sup}_A:\CP A\to A$, which sends each weight $\phi$ to its supremum.  Dually, we say that a $\sQ$-category $A$  is \emph{complete} if the co-Yoneda embedding $\syd_A:A\to\CPd A$ has a right adjoint,  $\inf_A:\CPd A\to A$, which sends   each $\psi\in\CPd A$ to its \emph{infimum}.

\begin{prop}{\rm(\cite[Proposition 5.10]{Stubbe2005})}
 A $\sQ$-category $A$ is complete if and only if it is cocomplete.
\end{prop}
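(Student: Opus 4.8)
\emph{Proof idea.} By a self-duality it should be enough to prove a single implication, say that every cocomplete $\sQ$-category is complete; the converse then follows by applying this to $A^{\op}$, noting that the weights of $A^{\op}$ are exactly the coweights of $A$ (so that $\CP(A^{\op})=(\CPd A)^{\op}$ and $\sy_{A^{\op}}=\syd_A$), whence ``$A^{\op}$ cocomplete'' means the same as ``$A$ complete''. So I would assume ${\sup}_A\dv\sy_A$ and aim to transcribe the order-theoretic identity $\bw S=\bv\{\mbox{lower bounds of }S\}$. Given a coweight $\psi:\star\oto A$, I would form its \emph{weight of lower bounds}
\[\psi_{\da}:A\oto\star,\qquad \psi_{\da}(x)=\bw_{z\in A}\bigl(\psi(z)\ra A(x,z)\bigr)=\CPd A(\syd_A(x),\psi),\]
check that $\psi_{\da}$ really is a weight of $A$, put $x_0:={\sup}_A\psi_{\da}$ (which exists by cocompleteness), and prove that $x_0=\inf_A\psi$, i.e. that $A(y,x_0)=\CPd A(\syd_A(y),\psi)$ for every $y\in A$.

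Verifying $\psi_{\da}(x)\with A(x',x)\le\psi_{\da}(x')$ should be a one-line computation from $q\with(q\ra r)\le r$ and the triangle inequality of $A$. In the identification $x_0=\inf_A\psi$, the inequality $\psi_{\da}(y)\le A(y,x_0)$ is just the unit $\psi_{\da}\le\sy_A({\sup}_A\psi_{\da})$ of the adjunction ${\sup}_A\dv\sy_A$. The reverse inequality is the heart of the matter, and I would isolate the following step: $x_0$ is a lower bound of $\psi$ in the strong sense that $\psi(z)\le A(x_0,z)$ for all $z$. This should drop out of $A(x_0,z)=\bw_{z'}\bigl(\psi_{\da}(z')\ra A(z',z)\bigr)$ (the defining property of the supremum) together with $\psi_{\da}(z')\with\psi(z)\le A(z',z)$ (immediate from the definition of $\psi_{\da}$). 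Granting it, $A(y,x_0)\with\psi(z)\le A(y,x_0)\with A(x_0,z)\le A(y,z)$, so $A(y,x_0)\le\psi(z)\ra A(y,z)$ for every $z$, i.e. $A(y,x_0)\le\psi_{\da}(y)$. Hence $A(y,x_0)=\psi_{\da}(y)=\CPd A(\syd_A(y),\psi)$ for all $y$, which is precisely the condition for $x_0$ to be the infimum of $\psi$; as every coweight then has an infimum, the assignment $\psi\mapsto\inf_A\psi$ is a $\sQ$-functor right adjoint to $\syd_A$, and $A$ is complete.

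I do not expect a genuine obstacle: the argument is the enriched reading of ``the infimum of a set is the supremum of its lower bounds''. The steps that call for a little care are keeping track of the direction of the distributor inequalities — in particular, that $\psi_{\da}$ comes out a weight and not a coweight — and the (automatic) remark that the pointwise-defined $\psi\mapsto\inf_A\psi$ is a $\sQ$-functor because it is determined by a universal property.
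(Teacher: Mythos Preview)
The paper does not prove this proposition: it is stated with a citation to \cite[Proposition 5.10]{Stubbe2005} and no argument is given. Your proposal is correct and is essentially the standard proof one finds in the literature---construct the infimum of a coweight $\psi$ as the supremum of its weight of ``lower bounds'' $\psi_{\da}$, then obtain the other implication by the duality $\CP(A^{\op})=(\CPd A)^{\op}$.
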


Since $(\CP,\sfs,\sy)$ is a KZ-doctrine, a $\sQ$-category $A$ is a $\CP$-algebra if and only if $\sy_A:A\to\CP A$ has a left inverse (hence $A$ is separated), and in this case the left inverse  is necessarily a left adjoint of   $\sy_A$, see e.g. \cite[Theorem 2.4]{Hof2013}.   A \emph{$\CP$-homomorphism} $f:A\to B$ between $\CP$-algebras $A$ and $B$ is a $\sQ$-functor $f:A\to B$ such that \[{\sup}_B\circ \CP f=f\circ{\sup}_A,\] which is equivalent to   $f$ being a left adjoint. Therefore, the category of $\CP$-algebras and $\CP$-homomorphisms  is just the category \[\sQ\text{-}{\sf Sup}\] of separated  cocomplete $\sQ$-categories and left adjoint $\sQ$-functors. Dually, since $(\CPd,\ssd,\sy^\dagger)$ is a co-KZ-doctrine, a $\CPd$-algebra is exactly a separated complete $\sQ$-category;
a $\CPd$-homomorphism $f:A\to B$ between $\CPd$-algebras   is a right adjoint $\sQ$-functor. Thus, the category of $\CPd$-algebras and $\CPd$-homomorphisms is just the category \[\QInf\] of  separated complete $\sQ$-categories and right adjoint $\sQ$-functors.

For each subset $C$ of a $\sQ$-category $A$, we  write $\bv C$ for a join of $C$ (which is unique up to isomorphism) in $(A,\pleq)$;    likewise  we  write $\bw C$ for a meet of $C$ in $(A,\pleq)$.

\begin{prop} {\rm(\cite{Borceux1994a,Stubbe2006})} \label{coco by tens}
 A $\sQ$-category $A$ is cocomplete if and only if it satisfies the following conditions:
\begin{enumerate}[label=\rm(\arabic*)]\setlength{\itemsep}{0pt}
\item $A$ is tensored in the sense that for all $p\in \sQ$, $x\in  A$, there is an element $p\otimes x\in A$, called the tensor of $p$ with $x$, such that for any $y\in A$,
\[ A(p\otimes x,y)=p\ra A(x,y);\]
\item Every subset $C$ of $A$ has a join in $(A,\pleq)$  and  for all $x\in A$, \[A(\bv C, x)=\bw_{y\in C}A(y,x).\]
\end{enumerate}
\end{prop}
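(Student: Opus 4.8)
The plan is to prove the two implications separately, in each case writing down explicit formulas and verifying their universal properties with the usual ``evaluate at the right object and use $k\le A(x,x)$'' manipulations.

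For the forward direction, I would assume $A$ is cocomplete, so that $\sup_A\colon\CP A\to A$ is left adjoint to $\sy_A$ and $A(\sup_A\phi,y)=\CP A(\phi,\sy_A(y))=\bw_{z}(\phi(z)\ra A(z,y))$ for every weight $\phi$. To get (1), for $p\in\sQ$ and $x\in A$ I would check that $p\with A(-,x)$ is a weight and set $p\otimes x:=\sup_A\bigl(p\with A(-,x)\bigr)$; then $A(p\otimes x,y)=\bw_{z}\bigl((p\with A(z,x))\ra A(z,y)\bigr)$, and one shows this meet equals $p\ra A(x,y)$ by bounding it above by its $z=x$ summand (since $A(x,x)\ge k$) and below by $p\ra A(x,y)$ (since $(p\ra A(x,y))\with p\with A(z,x)\le A(x,y)\with A(z,x)\le A(z,y)$). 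To get (2), for $C\subseteq A$ I would check that $\bv_{x\in C}A(-,x)$ is a weight and set $\bv C:=\sup_A\bigl(\bv_{x\in C}A(-,x)\bigr)$; an analogous computation gives $A(\bv C,y)=\bw_{x\in C}A(x,y)$, and from this the least-upper-bound property of $\bv C$ is immediate, while the fact that each $x\in C$ satisfies $x\pleq\bv C$ comes from the unit $\psi\pleq\sy_A(\sup_A\psi)$ of the adjunction, i.e.\ $\psi(z)\le A(z,\sup_A\psi)$.

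For the converse, assuming (1) and (2), I would take an arbitrary weight $\phi$ of $A$ and put $s_\phi:=\bv_{x\in A}\bigl(\phi(x)\otimes x\bigr)$. Then for every $y$,
$$A(s_\phi,y)=\bw_{x\in A}A(\phi(x)\otimes x,y)=\bw_{x\in A}\bigl(\phi(x)\ra A(x,y)\bigr)=\CP A(\phi,\sy_A(y))$$
by (2) followed by (1), which is exactly the condition that $s_\phi$ is a supremum of $\phi$; hence every weight has a supremum. Finally I would upgrade this to a left adjoint of $\sy_A$: the displayed identity says the graph of $\phi\mapsto s_\phi$ equals the cograph $(\sy_A)^*$, which both forces $\phi\mapsto s_\phi$ to be a $\sQ$-functor --- as $A(s_\phi,s_\rho)=\CP A(\phi,\sy_A(s_\rho))\ge\CP A(\phi,\rho)$ using $\rho(z)\le A(z,s_\rho)$ --- and yields $s_{(-)}\dashv\sy_A$, so $A$ is cocomplete.

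Since everything reduces to bookkeeping in $\QCat$, I do not expect a genuine obstacle. The steps that deserve the most care are the two meet identities in the forward direction, where one must confirm that the candidate tensors and conical joins really do satisfy the universal properties demanded in (1) and (2), and the very last step, where ``every weight has a supremum'' has to be turned into an honest adjunction $\sup_A\dashv\sy_A$.
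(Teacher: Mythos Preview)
The paper does not actually prove this proposition: it is stated with a citation to \cite{Borceux1994a,Stubbe2006} and no proof is given. Your proposal is correct and is the standard argument; in particular the formula $s_\phi=\bv_{x\in A}\phi(x)\otimes x$ you use for the converse direction is precisely the one the paper records immediately after the statement. One small point worth making explicit in your write-up: when you invoke $\rho(z)\le A(z,s_\rho)$ at the end, this follows because $\rho(z)\otimes z\pleq s_\rho$ (it is one of the joinands), hence $k\le A(\rho(z)\otimes z,s_\rho)=\rho(z)\ra A(z,s_\rho)$.
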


It is not hard to check that   for each weight $\phi$ of a cocomplete $\sQ$-category $A$, \[{\sup}_A\phi=\bv_{x\in X}\phi(x)\otimes x.\]
\begin{prop}{\rm(\cite{Borceux1994a,Stubbe2006})}\label{comp by cote}
 A $\sQ$-category $A$ is complete if  and only if it satisfies the following conditions:
\begin{enumerate}[label=\rm(\arabic*)]\setlength{\itemsep}{0pt}
\item  $A$ is cotensored in the sense that for all $p\in \sQ$, $x\in  A$, there is an element $p\multimap x\in  A$, called the cotensor of $p$ with $x$, such that for any $y\in A$,
\[A(y,p\multimap x) = p\ra A(y,x);\]
\item Every subset $C$ of $A$ has a meet in $(A,\pleq)$   and  for all $x\in A$, \[A(x,\bw C)=\bw_{y\in C}A(x,y).\]
\end{enumerate}
 \end{prop}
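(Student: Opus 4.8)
The plan is to read this off from Proposition~\ref{coco by tens} by passing to opposite $\sQ$-categories. For a $\sQ$-category $A$, let $A^{\op}$ be the opposite $\sQ$-category, with $A^{\op}(x,y)=A(y,x)$; commutativity of $\with$ makes this again a $\sQ$-category, and $(A^{\op},\pleq)$ is the opposite of the poset $(A,\pleq)$. First I would record the routine fact that, on the level of underlying maps, a coweight of $A$ is exactly a weight of $A^{\op}$ (the inequality $A(z,z')\with\psi(z)\leq\psi(z')$ defining a coweight is verbatim the one defining a weight of $A^{\op}$), and that under this identification $\CPd A=(\CP A^{\op})^{\op}$ as $\sQ$-categories, with the co-Yoneda embedding $\syd_A\colon A\to\CPd A$ corresponding to the Yoneda embedding $\sy_{A^{\op}}\colon A^{\op}\to\CP A^{\op}$. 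Consequently $\syd_A$ has a right adjoint in $\QCat$ if and only if $\sy_{A^{\op}}$ has a left adjoint in $\QCat$, that is, $A$ is complete if and only if $A^{\op}$ is cocomplete.

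Next I would apply Proposition~\ref{coco by tens} to $A^{\op}$ and translate its two conditions back along $A^{\op}(u,v)=A(v,u)$. Condition~(1) for $A^{\op}$ produces, for each $p\in\sQ$ and $x\in A$, an element $p\otimes x$ with $A^{\op}(p\otimes x,y)=p\ra A^{\op}(x,y)$ for all $y$; rewritten, $A(y,p\otimes x)=p\ra A(y,x)$, so $p\otimes x$ is precisely the cotensor $p\multimap x$ of the present statement, and ``$A^{\op}$ tensored'' is ``$A$ cotensored''. Condition~(2) for $A^{\op}$ says every $C\subseteq A$ has a join in $(A^{\op},\pleq)$, i.e.\ a meet in $(A,\pleq)$, and that $A^{\op}(\bv C,x)=\bw_{y\in C}A^{\op}(y,x)$, i.e.\ $A(x,\bw C)=\bw_{y\in C}A(x,y)$ -- exactly condition~(2) here. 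Putting the three equivalences together yields the proposition.

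The step requiring care is the bookkeeping in the first paragraph: one must confirm that the identification $\CPd A=(\CP A^{\op})^{\op}$ is compatible with the respective Yoneda embeddings, and that the $\op$-construction, which reverses the $2$-cells of the locally ordered category $\QCat$, sends ``right adjoint'' to ``left adjoint'' in exactly the way used here. If one prefers to avoid opposite $\sQ$-categories, the same statement can be obtained by a direct argument, and then the delicate point moves elsewhere. For necessity, set $p\multimap x:=\inf_A(p\with A(x,-))$ and, using that $A$ is cocomplete, let $\bw C$ be the join of the set of lower bounds of $C$, then verify $A(x,\bw C)=\bw_{y\in C}A(x,y)$ with the help of tensors. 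For sufficiency, set $g(\psi):=\bw_{x\in A}(\psi(x)\multimap x)$ and check, from the cotensor formula and condition~(2), that $A(y,g(\psi))=\bw_{x\in A}(\psi(x)\ra A(y,x))=\CPd A(\syd_A(y),\psi)$, so that $g$ is right adjoint to $\syd_A$. In that route the main obstacle is verifying that condition~(2) genuinely reconstructs the enriched infimum of an arbitrary coweight, rather than merely the meet in $(A,\pleq)$.
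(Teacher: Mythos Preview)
The paper does not prove this proposition: it is quoted from the literature (Borceux and Stubbe) without argument, just as Proposition~\ref{coco by tens} is. Your duality argument---passing to $A^{\op}$, applying Proposition~\ref{coco by tens}, and translating tensors/joins in $A^{\op}$ into cotensors/meets in $A$---is correct and is precisely the standard way to derive this statement from its cocomplete counterpart; the identifications $\CPd A=(\CP A^{\op})^{\op}$ and $\syd_A=(\sy_{A^{\op}})^{\op}$ that you flag are routine and go through exactly as you describe. The alternative direct route you sketch in the last paragraph is also sound, and your formula $\inf_A\psi=\bw_{x\in A}(\psi(x)\multimap x)$ is the expected one. There is nothing to correct.
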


\begin{prop}{\rm(\cite{Stubbe2006})} \label{ad func} Let $f:A\to B$ be $\sQ$-functor between complete  $\sQ$-categories.
\begin{enumerate}[label=\rm(\arabic*)]\setlength{\itemsep}{0pt}
\item $f$ is a left adjoint if and only if $f(p\otimes x)=p\otimes f(x)$ for all $p\in \sQ$, $x\in A$ and $f(\bv C)=\bv f(C)$ for  all   $C\subseteq A$.
\item $f$ is a right adjoint if and only if $f(p\multimap x)=p\multimap f(x)$ for all $p\in \sQ$, $x\in A$ and $f(\bw C)=\bw f(C)$ for  all   $C\subseteq A$.
\end{enumerate}
\end{prop}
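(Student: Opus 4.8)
The plan is to prove part~(1) directly and then deduce part~(2) from it by duality, so essentially all of the work is in~(1).

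\emph{Reducing (2) to (1).} Since $\sQ$ is commutative, the opposite $A^{\op}$ of a $\sQ$-category $A$, with $A^{\op}(x,y)=A(y,x)$, is again a $\sQ$-category; moreover $\CP(A^{\op})=(\CPd A)^{\op}$, so by the equivalence of completeness and cocompleteness, $A$ is complete iff $A^{\op}$ is. Passing to opposites turns tensors into cotensors, joins into meets, and left adjoints into right adjoints. Hence $f$ preserves cotensors and meets iff $f^{\op}:A^{\op}\to B^{\op}$ preserves tensors and joins iff, by~(1), $f^{\op}$ is a left adjoint iff $f$ is a right adjoint.

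\emph{Necessity in (1).} Suppose $f\dashv g$, equivalently $B(f(x),y)=A(x,g(y))$ for all $x\in A$, $y\in B$. Combining this identity with the defining equation of the tensor (Proposition~\ref{coco by tens}(1)) in $A$ and in $B$ gives $B(f(p\otimes x),-)=B(p\otimes f(x),-)$, and combining it with Proposition~\ref{coco by tens}(2) gives $B(f(\bv C),-)=B(\bv f(C),-)$; the enriched Yoneda lemma then forces $f(p\otimes x)\cong p\otimes f(x)$ and $f(\bv C)\cong\bv f(C)$ (genuine equalities when $B$ is separated).

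\emph{Sufficiency in (1).} Assume $f$ preserves tensors and joins. Since every weighted colimit is built from tensors and joins via $\sup_A\phi=\bv_x\phi(x)\otimes x$ (Proposition~\ref{coco by tens}), $f$ preserves all suprema of weights; I would then exhibit a right adjoint of $f$ explicitly. For $y\in B$, the formula $\psi_y(x):=B(f(x),y)$ defines a weight $\psi_y$ of $A$, and $y\mapsto\psi_y$ is a $\sQ$-functor $B\to\CP A$ (both facts use only that $f$ is a $\sQ$-functor together with the composition law in $B$); since $A$ is cocomplete, $g:=\sup_A\circ(y\mapsto\psi_y):B\to A$ is a well-defined $\sQ$-functor. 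It then remains to verify $1_A\pleq g\circ f$ and $f\circ g\pleq 1_B$. The first is immediate from $\sup_A\phi=\bv_x\phi(x)\otimes x$: one joinand of $g(f(x_0))$ is $B(f(x_0),f(x_0))\otimes x_0$, so $g(f(x_0))\sqsupseteq B(f(x_0),f(x_0))\otimes x_0\sqsupseteq k\otimes x_0\cong x_0$. For the second, preservation of joins and of tensors yields $f(g(y))=\bv_{x\in A}(B(f(x),y)\otimes f(x))$, and then Proposition~\ref{coco by tens}(2) together with the tensor equation in $B$ gives
\[B(f(g(y)),y)=\bw_{x\in A}\bigl(B(f(x),y)\ra B(f(x),y)\bigr)\geq k,\]
so $f(g(y))\pleq y$. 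Hence $f\dashv g$.

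\emph{Expected main obstacle.} The delicate part is the sufficiency direction, and in particular the decision \emph{not} to try to verify $f_*=g^*$ by computing $A(x_0,g(y))$ head-on: the representable $A(x_0,-)$ does not preserve joins, so it does not commute with the colimit formula defining $g(y)$, and there is no usable closed form for $A(x_0,\sup_A\phi)$. Routing the argument instead through the unit/counit description of an adjunction circumvents this, and the two preservation hypotheses get used exactly at the point where $f$ must be pushed through the join-of-tensors expression for $f\circ g$; note that the unit inequality uses neither hypothesis, so all of the real content sits in the counit. A minor recurring nuisance is that identities such as $f(p\otimes x)=p\otimes f(x)$ hold only up to isomorphism in the target unless it is separated.
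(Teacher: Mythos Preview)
The paper does not supply its own proof of this proposition: it is stated with a citation to \cite{Stubbe2006} and no argument is given. So there is nothing in the paper to compare your proof against.

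That said, your argument is correct and is essentially the standard one. The reduction of (2) to (1) via opposites is fine. In the necessity direction of (1) you use the hom-set description of adjunctions together with the defining universal properties of tensors and joins, which is exactly right. In the sufficiency direction, defining $g(y)=\sup_A B(f(-),y)$ and verifying the unit/counit inequalities is the natural route; your computation of the counit, pushing $f$ through the join and then through each tensor, is where both preservation hypotheses are genuinely used, and the final step $\bw_x\bigl(B(f(x),y)\ra B(f(x),y)\bigr)\geq k$ is immediate. Your remark that the representable $A(x_0,-)$ does not commute with the colimit defining $g(y)$, so that one should not attempt to verify $f_*=g^*$ directly, is a sound diagnosis; and the caveat about equalities versus isomorphisms in non-separated targets is accurate but, as you note, cosmetic.

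One small stylistic point: since you observe that $f$ preserving tensors and joins forces $f\circ\sup_A=\sup_B\circ\CP f$, an equally short alternative for sufficiency is to note that this makes $f$ a $\CP$-homomorphism, and then quote the general fact (recorded just above the proposition in the paper) that $\CP$-homomorphisms between $\CP$-algebras are exactly left adjoints. Your explicit unit/counit verification is of course more self-contained.
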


\begin{exmp}\label{exmp of comp}
(1) For each $\sQ$-category $A$, $\CP A$ is complete, in which \[(p\otimes\phi)(x)=p\with\phi(x)\quad \text{and}\quad (p\multimap\phi)(x)=p\ra\phi(x)\] for all $p\in\sQ$ and $\phi\in\CP A$.

(2) For each $\sQ$-category $A$, $\CPd A$ is complete, in which \[(p\multimap\psi)(x)=p\with\psi(x)\quad \text{and}\quad (p\otimes\psi)(x)=p\ra\psi(x)\] for all $p\in\sQ$ and $\psi\in\CPd A$.
\end{exmp}

\section{Completely distributive quantale-enriched categories}

A \emph{saturated class of weights} \cite{AK,KS2005,Stubbe2010} is a full submonad  $(\CT,\sm,\se)$ of the presheaf monad $(\CP,\sfs,\sy)$ on $\QCat$. Explicitly, a saturated class of weights is a triple $(\CT,\sm,\se)$  subject to:
\begin{itemize}\setlength{\itemsep}{0pt}
\item $\CT $ is a subfunctor of $\CP:\QCat\to\QCat $;
\item all inclusions $\vep_A:\CT A\to\CP A$ are fully faithful;
\item all $\vep_A$ form a natural transformation such that
\[\sfs\circ(\vep*\vep)=\vep\circ\sm\quad\mbox
{and}\quad\vep\circ\se=\sy.\]
\end{itemize}
Said differently, a saturated class of weights is a functor $\CT:\QCat\to\QCat$ such that   $\CT A$ is a full sub-$\sQ$-category of $\CP A$ through which the Yoneda embedding $\sy_A:A\to\CP A$  factors, and that for each $\Phi\in\CT\CT A$, the supremum of  \[\Phi\circ\varepsilon_A^*:\CP A\oto\CT A\oto\star\] in $\CP A$ belongs to $\CT A$.

Since  $(\CP,\sfs,\sy)$ is a KZ-doctrine, then so is every saturated class of weights $(\CT,\sm,\se)$ on $\QCat$. Thus, for each saturated class of weights $(\CT,\sm,\se)$  on $\QCat$, a \emph{$\CT$-algebra} $A$ is   a $\sQ$-category $A$ such that \[\se_A:A\to\CT A\] has a left inverse (which is necessarily a left adjoint of $\se_A$)
\[{\sup}_A :\CT A\to  A. \] Said differently, a $\CT$-algebra is a separated $\sQ$-category $A$ such that every $\phi\in\CT A$ has a supremum.
 A \emph{$\CT$-homomorphism} $f:A\to B$ between $\CT$-algebras   is a $\sQ$-functor such that
 \[f\circ{\sup}_A={\sup}_B\circ\CT f.\] The category of $\CT$-algebras and $\CT$-homomorphisms is denoted by \[\CT\text{-}{\sf Alg}.\]
For the largest saturated class of weights $\CP$, the category $\CP$-{\sf Alg} is just the category $\QSup$ of separated cocomplete $\sQ$-categories and left adjoint $\sQ$-functors.

It is clear that every $\CP$-algebra is  a $\CT$-algebra and every $\CP$-homomorphism is a $\CT$-homomorphism, so  the category $\CP$-{\sf Alg} is a  subcategory of $\CT\text{-}{\sf Alg}$.

\begin{prop} \label{retract of T-alg} Let $\CT$ be a saturated class of weights on $\QCat$. Then, every retract of a $\CT$-algebra in  $\QCat$ is a $\CT$-algebra. \end{prop}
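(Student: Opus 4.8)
The plan is to exploit that $(\CT,\sm,\se)$ is a KZ-doctrine, so that being a $\CT$-algebra is a \emph{property} rather than extra structure: by the characterization recalled above, a $\sQ$-category $C$ is a $\CT$-algebra precisely when the unit $\se_C:C\to\CT C$ admits a left inverse, and such a left inverse is then automatically the left-adjoint structure map ${\sup}_C$. Consequently, given a $\CT$-algebra $B$ together with $\sQ$-functors $s:A\to B$ and $r:B\to A$ satisfying $r\circ s=1_A$, it suffices to produce a single left inverse of $\se_A:A\to\CT A$.

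The candidate is the composite ${\sup}_A:=r\circ{\sup}_B\circ\CT s:\CT A\to A$, where ${\sup}_B:\CT B\to B$ is the structure map of $B$. First I would form $\CT s:\CT A\to\CT B$ using functoriality of $\CT$ (a subfunctor of $\CP$), and then use naturality of the unit $\se$ to rewrite $\CT s\circ\se_A=\se_B\circ s$. The verification then collapses to the hypothesis:
\[{\sup}_A\circ\se_A=r\circ{\sup}_B\circ\CT s\circ\se_A=r\circ{\sup}_B\circ\se_B\circ s=r\circ s=1_A,\]
where I have used ${\sup}_B\circ\se_B=1_B$. Hence $\se_A$ has a left inverse, and so $A$ is a $\CT$-algebra. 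I would also record, for completeness, that $A$ is separated: a $\sQ$-functor preserves isomorphisms, so isomorphic $x,y\in A$ have $s(x)\cong s(y)$ in $B$, whence $s(x)=s(y)$ by separatedness of $B$, and therefore $x=r(s(x))=r(s(y))=y$.

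There is essentially no obstacle here; the statement is formal once lax-idempotence of $\CT$ is invoked, the only substantive ingredients being functoriality of $\CT$, naturality of $\se$, and the unit identity for the KZ-algebra $B$. The one point worth flagging is precisely that lax-idempotence is what makes the argument go through: for a general (non-lax-idempotent) monad a retract of an algebra need not be an algebra, since one would then have to equip $A$ with an actual structure map compatible with $s$ and $r$ — whereas here no compatibility has to be checked, because the structure map is unique when it exists.
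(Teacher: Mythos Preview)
Your proof is correct and essentially identical to the paper's own argument: both define ${\sup}_A:=r\circ{\sup}_B\circ\CT s$ and verify it is a left inverse of $\se_A$ via naturality of $\se$ and the identity ${\sup}_B\circ\se_B=1_B$. Your additional remark on separatedness is correct but in fact redundant, since the existence of a left inverse to $\se_A$ already forces $A$ to be separated (isomorphic objects have equal representables).
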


\begin{proof}Suppose that $B$ is a $\CT$-algebra;  $s:A\to B$ and $r:B\to A$ are $\sQ$-functors such that $r\circ s=1_A$. Let $\sup_A$ be the composite \[\CT A\to^{\CT s}\CT B\to^{{\sup}_B}B\to^r A.\] Then   \[{\sup}_A\circ\se_A= r\circ{\sup}_B\circ\CT s\circ \se_A  = r\circ{\sup}_B\circ\se_B\circ s =r\circ s =1_A,\]   so, $\sup_A$ is a left inverse of $\se_A$ and consequently, $A$ is a $\CT$-algebra. \end{proof}

\begin{defn}\label{T-continuous} Let $(\CT,\sm,\se)$ be a saturated class of weights on $\QCat$. A $\sQ$-category is said to be $\CT$-continuous if it is a $\CT$-continuous $\CT$-algebra; that is,  if $A$ is separated  and there is a string of adjoint $\sQ$-functors
\[{\sf t}_A\dv{\sup}_A\dv\se_A:A\to\CT A.\]
\end{defn}

\begin{prop}\label{TA is T-continuous} Let $(\CT,\sm,\se)$ be a saturated class of weights on $\QCat$. Then,  for every $\sQ$-category $A$, the $\sQ$-category $\CT A$ is $\CT$-continuous.
\end{prop}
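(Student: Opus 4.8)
The plan is to exhibit the left adjoint $\sf t_{\CT A}$ of $\sup_{\CT A}:\CT\CT A\to\CT A$ explicitly and recognize it as the multiplication of the KZ-doctrine. Recall that since $(\CT,\sm,\se)$ is a KZ-doctrine, for every $\sQ$-category $B$ there is a string of adjunctions
\[
\CT\se_B\dv\sm_B\dv\se_{\CT B}:\CT B\to\CT\CT B.
\]
Apply this with $B=A$. The right-hand adjunction $\sm_A\dv\se_{\CT A}$ says that $\sm_A$ is a left inverse of $\se_{\CT A}$ (since $(\CT,\sm,\se)$ is a monad, $\sm_A\circ\se_{\CT A}=1_{\CT A}$), and as $\CT$ is a KZ-doctrine this left inverse is the left adjoint of $\se_{\CT A}$; that is, $\sm_A$ \emph{is} $\sup_{\CT A}$, the supremum map witnessing that $\CT A$ is a $\CT$-algebra. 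Then the left-hand adjunction $\CT\se_A\dv\sm_A$ reads $\CT\se_A\dv\sup_{\CT A}$, which is precisely a left adjoint to $\sup_{\CT A}$. Thus we obtain the required string
\[
\CT\se_A\dv\sup_{\CT A}\dv\se_{\CT A}:\CT A\to\CT\CT A,
\]
so taking ${\sf t}_{\CT A}=\CT\se_A$ exhibits $\CT A$ as a $\CT$-continuous $\CT$-algebra. Finally $\CT A$ is separated: it is a full sub-$\sQ$-category of $\CP A$, which is separated (being complete, cf.\ Example \ref{exmp of comp}), and a full subcategory of a separated $\sQ$-category is separated.

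The one point that needs a word of justification is the identification $\sm_A=\sup_{\CT A}$, i.e.\ that the monad multiplication genuinely coincides with the supremum structure map of the $\CT$-algebra $\CT A$. This is the standard fact that the free algebras of a KZ-doctrine have their algebra structure given by the multiplication: since $\CT$ is KZ, $\se_{\CT A}$ has $\sm_A$ as both a left inverse and (automatically then) a left adjoint, and by the characterization of $\CT$-algebras recalled before Definition \ref{T-continuous} (a $\CT$-algebra is a $\sQ$-category whose unit $\se$ has a left inverse, necessarily the left adjoint $\sup$), this forces $\sup_{\CT A}=\sm_A$. Since this is exactly the structure used earlier in the excerpt, no real obstacle remains; the proof is essentially an unwinding of the KZ-doctrine axioms applied to the free algebra $\CT A$.
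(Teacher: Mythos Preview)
Your proof is correct and follows the same approach as the paper: both invoke the KZ-doctrine string $\CT\se_A\dv\sm_A\dv\se_{\CT A}$ and read off $\CT$-continuity of $\CT A$ from it, with ${\sf t}_{\CT A}=\CT\se_A$. Your version is simply more explicit about identifying $\sm_A$ with $\sup_{\CT A}$ and about separatedness (one small quibble: separatedness of $\CP A$ follows directly from the pointwise definition of its hom, not from completeness per se, but the conclusion stands).
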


\begin{proof} Since $(\CT,\sm,\se)$ is saturated, it follows that for every $\sQ$-category $A$, there is a string of adjoint $\sQ$-functors \[\CT\se_A\dv\sm_A\dv\se_{\CT A}:\CT A\to\CT\CT A,\] which entails that $\CT A$ is $\CT$-continuous. \end{proof}

\begin{prop}\label{retract of continuous} Let $\CT$ be a saturated class of weights on $\QCat$. Then,  in the category  $\CT$-{\sf Alg}, every retract  of a $\CT$-continuous $\CT$-algebra is $\CT$-continuous.
\end{prop}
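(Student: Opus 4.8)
The plan is to transport the extra left adjoint witnessing $\CT$-continuity from $B$ to $A$ along the retraction and then to verify the two required inequalities by a short formal computation in the locally ordered category $\QCat$. So let $B$ be a $\CT$-continuous $\CT$-algebra and $s:A\to B$, $r:B\to A$ be $\CT$-homomorphisms with $r\circ s=1_A$. First I would invoke Proposition \ref{retract of T-alg} to get that $A$ is a $\CT$-algebra, with $\sup_A=r\circ\sup_B\circ\CT s$; by uniqueness of the left adjoint of $\se_A$ this is the very $\sup_A$ occurring in the definition of a $\CT$-homomorphism, so that $s\circ\sup_A=\sup_B\circ\CT s$ and $r\circ\sup_B=\sup_A\circ\CT r$. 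I would also note that $A$ is separated: $B$ is separated, so isomorphic objects of $A$ are carried by the $\sQ$-functor $s$ to isomorphic, hence equal, objects of $B$, and they are then equal in $A$ because $r\circ s=1_A$. Since $(\CT,\sm,\se)$ is a KZ-doctrine, $\sup_A\dv\se_A$ holds automatically, so it remains only to produce a left adjoint of $\sup_A$.

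The candidate is ${\sf t}_A:=\CT r\circ{\sf t}_B\circ s:A\to\CT A$, where ${\sf t}_B\dv\sup_B$ witnesses the $\CT$-continuity of $B$, that is, $1_B\pleq\sup_B\circ{\sf t}_B$ and ${\sf t}_B\circ\sup_B\pleq 1_{\CT B}$. To check ${\sf t}_A\dv\sup_A$ I would compute, using monotonicity of composition, functoriality of $\CT$, the two $\CT$-homomorphism equations, and $r\circ s=1_A$,
\[1_A=r\circ s\pleq r\circ\sup_B\circ{\sf t}_B\circ s=\sup_A\circ\CT r\circ{\sf t}_B\circ s=\sup_A\circ{\sf t}_A,\]
\[{\sf t}_A\circ\sup_A=\CT r\circ{\sf t}_B\circ s\circ\sup_A=\CT r\circ{\sf t}_B\circ\sup_B\circ\CT s\pleq\CT r\circ\CT s=\CT(r\circ s)=1_{\CT A}.\]
Hence ${\sf t}_A\dv\sup_A\dv\se_A$, and since $A$ is separated, $A$ is $\CT$-continuous.

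I do not expect a genuine obstacle here: once the formula ${\sf t}_A=\CT r\circ{\sf t}_B\circ s$ is written down, everything is a formal diagram chase. The two points that need a little care are identifying the $\sup_A$ supplied by Proposition \ref{retract of T-alg} with the one appearing in the definition of $\CT$-homomorphism, and keeping the directions of the inequalities straight --- note that the first inequality uses only the $\CT$-homomorphism equation for $r$ and the second uses only that for $s$.
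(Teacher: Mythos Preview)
Your proof is correct and follows the same approach as the paper: both define ${\sf t}_A=\CT r\circ{\sf t}_B\circ s$ and verify the two triangle inequalities using the $\CT$-homomorphism equations for $r$ and $s$ respectively. The only cosmetic difference is that the paper obtains the equality $\sup_A\circ{\sf t}_A=1_A$ (using that separatedness of $B$ forces $\sup_B\circ{\sf t}_B=1_B$), whereas you record only the inequality $1_A\pleq\sup_A\circ{\sf t}_A$, which is all that is needed; your added remarks on separatedness of $A$ and on identifying $\sup_A$ make explicit points the paper leaves implicit.
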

\begin{proof} Suppose that $B$ is a $\CT$-continuous $\CT$-algebra;  $s:A\to B$ and $r:B\to A$ are $\CT$-homomorphisms such that $r\circ s=1_A$. We claim that ${\sf t}_A:=\CT r\circ {\sf t}_B\circ s$ is left adjoint to $\sup_A$, hence $A$ is $\CT$-continuous. On one hand,
\begin{align*}{\sup}_A\circ {\sf t}_A&={\sup}_A\circ\CT r\circ {\sf t}_B\circ s\\
&=r\circ{\sup}_B\circ {\sf t}_B\circ s &(\mbox{$r$ is a $\CT$-homomorphism})\\
&=r\circ s\\
&=1_A.
\end{align*}
On the other hand,
\begin{align*}{\sf t}_A\circ {\sup}_A &=\CT r\circ {\sf t}_B\circ s\circ{\sup}_A\\
&=\CT r\circ {\sf t}_B\circ{\sup}_B\circ\CT s&(\mbox{$s$ is a $\CT$-homomorphism})\\
&\pleq\CT r\circ\CT s &({\sf t}_B\dashv{\sup}_B)\\
&=1_{\CT A}.
\end{align*}
Thus, ${\sf t}_A$ is left adjoint to $\sup_A$, as desired.
\end{proof}

\begin{cor}\label{continuity=retract} Let $\CT$ be a saturated class of weights. Then, a $\CT$-algebra $A$ is $\CT$-continuous if and only if it is a retract of   $\CT A$ in $\CT$-{\sf Alg}. \end{cor}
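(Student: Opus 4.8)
The plan is to prove Corollary \ref{continuity=retract} as a straightforward consequence of the three preceding propositions, namely Proposition \ref{TA is T-continuous}, Proposition \ref{retract of continuous}, and the characterization of $\CT$-continuity built into Definition \ref{T-continuous}. The two directions are asymmetric in difficulty: the ``if'' direction is essentially immediate, while the ``only if'' direction requires exhibiting explicit $\CT$-homomorphisms that witness $A$ as a retract of $\CT A$.

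For the ``if'' direction, suppose $A$ is a retract of $\CT A$ in $\CT$-{\sf Alg}. By Proposition \ref{TA is T-continuous}, $\CT A$ is a $\CT$-continuous $\CT$-algebra. Then Proposition \ref{retract of continuous} applies verbatim: every retract in $\CT$-{\sf Alg} of a $\CT$-continuous $\CT$-algebra is $\CT$-continuous, so $A$ is $\CT$-continuous. This takes only a sentence.

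For the ``only if'' direction, suppose $A$ is a $\CT$-continuous $\CT$-algebra, so there is a string ${\sf t}_A\dv{\sup}_A\dv\se_A:A\to\CT A$. I would take $s:={\sf t}_A:A\to\CT A$ and $r:={\sup}_A:\CT A\to A$. First I must check these are $\CT$-homomorphisms: ${\sup}_A$ is a left adjoint (it is the structure map of the $\CT$-algebra $A$, left adjoint to $\se_A$), hence a $\CT$-homomorphism by the discussion of KZ-doctrines; and ${\sf t}_A$ is a left adjoint (it is left adjoint to ${\sup}_A$), and since $\CT A$ is itself a $\CT$-algebra (Proposition \ref{TA is T-continuous}, or directly because $\CT A=\CT A$ has structure map $\sm_A$), ${\sf t}_A$ being a left adjoint between $\CT$-algebras means it commutes with suprema, i.e.\ is a $\CT$-homomorphism. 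Then $r\circ s={\sup}_A\circ{\sf t}_A=1_A$ because ${\sf t}_A\dv{\sup}_A$ is in fact a coreflection (the left adjoint ${\sf t}_A$ of the left-inverse-and-left-adjoint ${\sup}_A$ satisfies ${\sup}_A\circ{\sf t}_A=1_A$, which also appeared in the proof of Proposition \ref{retract of continuous}). Hence $A$ is a retract of $\CT A$ in $\CT$-{\sf Alg}.

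The main obstacle is the bookkeeping around why ${\sf t}_A$ and ${\sup}_A$ are genuinely $\CT$-homomorphisms and why ${\sup}_A\circ{\sf t}_A=1_A$ rather than merely $\pleq 1_A$; both points hinge on the KZ-doctrine fact (already invoked earlier) that for a left adjoint ${\sf t}_A\dv{\sup}_A$ in which ${\sup}_A$ is a split epi with section $\se_A$, the unit of the adjunction ${\sf t}_A\dv{\sup}_A$ is an isomorphism. I expect this to be a short argument citing the same references used in the proof of Proposition \ref{retract of continuous}, so no heavy computation is needed. I would write:

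\begin{proof}
If $A$ is a retract of $\CT A$ in $\CT$-{\sf Alg}, then, since $\CT A$ is $\CT$-continuous by Proposition \ref{TA is T-continuous}, it follows from Proposition \ref{retract of continuous} that $A$ is $\CT$-continuous.

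Conversely, suppose $A$ is $\CT$-continuous, with ${\sf t}_A\dv{\sup}_A\dv\se_A:A\to\CT A$. Both ${\sup}_A$ and ${\sf t}_A$ are left adjoint $\sQ$-functors between $\CT$-algebras, hence $\CT$-homomorphisms. Moreover ${\sf t}_A$ is a left adjoint of ${\sup}_A$, which is a left inverse of $\se_A$; consequently the unit $1_A\pleq{\sup}_A\circ{\sf t}_A$ of this adjunction is an isomorphism, and by separatedness of $A$ we get ${\sup}_A\circ{\sf t}_A=1_A$. Thus ${\sf t}_A:A\to\CT A$ and ${\sup}_A:\CT A\to A$ exhibit $A$ as a retract of $\CT A$ in $\CT$-{\sf Alg}.
\end{proof}
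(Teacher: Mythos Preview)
Your proof is correct and is precisely the argument the paper intends: the corollary is stated without proof because it follows at once from Propositions \ref{TA is T-continuous} and \ref{retract of continuous} for the ``if'' direction, and from the string ${\sf t}_A\dv{\sup}_A\dv\se_A$ for the ``only if'' direction, exactly as you wrote. The two points you flagged as bookkeeping are indeed routine: that left adjoints between $\CT$-algebras are $\CT$-homomorphisms is a standard fact about KZ-doctrines (both sides of $f\circ{\sup}_A={\sup}_B\circ\CT f$ are left adjoints whose right adjoints agree by naturality of $\se$), and ${\sup}_A\circ{\sf t}_A=1_A$ follows from ${\sup}_A\circ{\sf t}_A={\sup}_A\circ{\sf t}_A\circ{\sup}_A\circ\se_A\pleq{\sup}_A\circ\se_A=1_A$ together with the unit inequality and separatedness.
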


Letting $\CT=\CP$ in Definition \ref{T-continuous} we obtain the notion of completely distributive $\sQ$-categories. Explicitly,

\begin{defn} \cite{Stubbe2007} A  $\sQ$-category $A$ is   said to be completely distributive (or, totally continuous)
if it is a $\CP$-continuous $\CP$-algebra; that is, if $A$ is separated and there exists a string of adjoint $\sQ$-functors  \[{\sf t}_A\dv{\sup}_A\dv\sy_A: A\to\CP A.\]
  \end{defn}

\begin{prop}
A complete $\sQ$-category $A$ is completely distributive if and only if it is a retract of some power of $(\sQ,d_L)$ in   $\QSup$.
\end{prop}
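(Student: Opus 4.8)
The plan is to prove both implications by exhibiting the relevant retracts explicitly, using Corollary \ref{continuity=retract} as the bridge between ``completely distributive'' and ``retract of $\CP A$ in $\QSup$''.

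\medskip

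\noindent\textbf{From completely distributive to a retract of a power of $(\sQ,d_L)$.} First I would observe that, by the previous corollary with $\CT=\CP$, a complete $\sQ$-category $A$ is completely distributive precisely when it is a retract of $\CP A$ in $\QSup$. So it suffices to show that $\CP A$ is itself a retract of a power of $(\sQ,d_L)$ in $\QSup$. For this, I would use the evaluation maps: for each $x\in A$, the map $\mathrm{ev}_x\colon\CP A\to(\sQ,d_L)$ sending $\phi\mapsto\phi(x)$ is a $\sQ$-functor (indeed $\CP A(\phi,\rho)\le\phi(x)\ra\rho(x)=d_L(\phi(x),\rho(x))$ by definition of the hom in $\CP A$), and it is a left adjoint because it preserves tensors and joins: tensors and joins in $\CP A$ are computed pointwise (Example \ref{exmp of comp}(1) gives $(p\otimes\phi)(x)=p\with\phi(x)$, and joins of weights are pointwise joins), and in $(\sQ,d_L)$ the tensor is $p\otimes q=p\with q$ and joins are suprema in $\sQ$; one then invokes Proposition \ref{ad func}(1). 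Bundling these together gives a $\sQ$-functor $e=(\mathrm{ev}_x)_{x\in A}\colon\CP A\to(\sQ,d_L)^A$ which is a left adjoint (a family of left adjoints into a product is a left adjoint, since limits and colimits in the product are pointwise). The splitting $r\colon(\sQ,d_L)^A\to\CP A$ is the ``restriction to genuine weights'': send a family $(q_x)_{x\in A}$ to the largest weight below it, i.e.\ take the colimit/supremum in $\CP A$ of the weight-data this family provides. Concretely $r$ is the left adjoint to $e$; the composite $r\circ e$ is the identity on $\CP A$ because every weight $\phi$ is already determined by its values $(\phi(x))_{x\in A}$ and is fixed under passing to the largest weight below it. This exhibits $\CP A$, hence $A$, as a retract of $(\sQ,d_L)^A$ in $\QSup$.

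\medskip

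\noindent\textbf{From a retract of a power of $(\sQ,d_L)$ to completely distributive.} For the converse I would first record that $(\sQ,d_L)$ is completely distributive: this is essentially the statement that $\sQ$ is a (constructively) completely distributive lattice, which holds for any quantale — one checks directly that $\sy_{(\sQ,d_L)}\colon(\sQ,d_L)\to\CP(\sQ,d_L)$ has a left adjoint which itself has a left adjoint, or one notes that $\CP(\star)\cong(\sQ,d_L)$ and applies Proposition \ref{TA is T-continuous} with $\CT=\CP$ and $A=\star$. Next, completely distributive $\sQ$-categories are closed under products in $\QSup$: if each $A_i$ is a retract of $\CP A_i$ in $\QSup$, then $\prod_i A_i$ is a retract of $\prod_i\CP A_i$, and $\prod_i\CP A_i\cong\CP(\coprod_i A_i)$ in $\QSup$ (a standard computation, since a weight on a coproduct is a family of weights), so $\prod_i A_i$ is a retract of a free $\CP$-algebra, hence $\CP$-continuous by Corollary \ref{continuity=retract}. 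In particular any power $(\sQ,d_L)^X$ is completely distributive. Finally, a retract in $\QSup$ of a completely distributive $\sQ$-category is completely distributive: this is exactly Proposition \ref{retract of continuous} applied to $\CT=\CP$ (retracts in $\CP\text{-}{\sf Alg}=\QSup$ of $\CP$-continuous $\CP$-algebras are $\CP$-continuous). Chaining these, any retract of $(\sQ,d_L)^X$ in $\QSup$ is completely distributive.

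\medskip

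\noindent\textbf{Main obstacle.} The routine part is the two closure properties (products, retracts), which are already packaged in Propositions \ref{retract of continuous} and \ref{TA is T-continuous}. The one genuinely delicate point is the identification $\CP A\cong(\sQ,d_L)^A$ as a \emph{retract} in $\QSup$ — i.e.\ verifying that the evaluation family $e\colon\CP A\to(\sQ,d_L)^A$ is a left adjoint (preservation of tensors and all joins, via Proposition \ref{ad func}(1)) and that $r\circ e=1_{\CP A}$ where $r$ is the induced right... rather, the induced left adjoint of $e$. One must be careful here that it is $e$ that is split monic (with retraction its right adjoint composed appropriately), not the other way around; equivalently, $\CP A$ sits inside $(\sQ,d_L)^A$ as those families $(q_x)_x$ satisfying the distributor inequalities $A(x,y)\with q_x\le q_y$, and this subobject is the image of an idempotent left adjoint on $(\sQ,d_L)^A$. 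Getting the direction of the splitting and the computation of this idempotent correct is where care is needed; everything else follows formally from the KZ-doctrine machinery recalled above.
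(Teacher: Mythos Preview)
Your overall strategy is correct and coincides with the paper's, though the paper packages it more economically. The observation you are working around is stated directly there: for any set $X$, viewed as a discrete $\sQ$-category, one has $(\sQ,d_L)^X=\CP X$. With this, sufficiency is immediate from Propositions~\ref{TA is T-continuous} and~\ref{retract of continuous} (no separate product-closure argument is needed), and for necessity the paper simply notes that a completely distributive $A$ is a retract of $\CP A$, which is in turn a retract of $\CP|A|=(\sQ,d_L)^{|A|}$, where $|A|$ is the discrete $\sQ$-category on the objects of $A$. Your evaluation embedding $e$ is precisely the inclusion $\CP A\hookrightarrow\CP|A|$, and the retraction is $\CP j$ for the identity-on-objects functor $j\colon|A|\to A$.

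There is, however, a real confusion in your description of the retraction $r$. You write that $r$ sends a family to ``the largest weight below it'' and then identify this with ``the left adjoint to $e$''. These are different maps. The largest-weight-below construction is the \emph{right} adjoint of $e$, and it is generally \emph{not} a $\QSup$-morphism: for $A$ consisting of two isomorphic objects, weights are constant pairs and the coreflection $(p,q)\mapsto(p\wedge q,\,p\wedge q)$ fails to preserve the join of $(1,0)$ and $(0,1)$. The map you actually need is the \emph{left} adjoint of $e$, sending $(q_x)_x$ to the \emph{smallest} weight above it, namely $x'\mapsto\bigvee_{x}q_x\with A(x',x)$; this is exactly $\CP j$, and being a left adjoint it automatically lies in $\QSup$ with $r\circ e=1_{\CP A}$ by full faithfulness of $e$. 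Once you straighten this out, your argument goes through and the ``main obstacle'' disappears.
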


\begin{proof} For each set $X$, the power  $(\sQ,d_L)^X$ (see Example \ref{d_L}) in $\QSup$ is clearly the $\sQ$-category $\CP X$ when $X$ is viewed as a discrete $\sQ$-category (defined in an evident way). So,  sufficiency follows from propositions   \ref{TA is T-continuous} and \ref{retract of continuous}. Necessity follows from the observation that a  completely distributive $\sQ$-category $A$ is a retract of $\CP A$  which is a retract of $\CP|A|$, where $|A|$ is the discrete $\sQ$-category with the same objects as those of $A$. \end{proof}

\begin{defn}\cite{Lai2018} A separated complete $\sQ$-category $A$ is   completely co-distributive if there exists a string of adjoint $\sQ$-functors:\[\syd_A\dashv{\inf}_A\dashv {\sf t}^\dagger_A:A\to\CPd A.\] \end{defn}

It is not hard to see that a $\sQ$-category $A$ is completely co-distributive if and only if $A^{\rm op}$, the opposite of $A$ given by $A^{\rm op}(x,y)=A(y,x)$, is   completely distributive.
Since $(\CPd,\ssd,\sy^\dagger)$ is a co-KZ-doctrine on   $\QCat$,   for each $\sQ$-category $A$, the $\sQ$-category  $\CPd A$ is  easily verified to be completely co-distributive.
It is known in lattice theory that the notion of complete distributivity is self dual, i.e., a complete lattice is completely distributive if and only if so is its opposite, see e.g. \cite[VII.1.10]{Johnstone1982}. But, this is not always true for $\sQ$-categories. In fact, it is proved in \cite[Theorem 8.2]{Lai2018} that for an integral quantale $\sQ$,  every completely distributive  $\sQ$-category is completely co-distributive if and only if   $\sQ$ satisfies the law of double negation. So, complete distributivity and complete co-distributivity are no longer equivalent concepts for quantale-enriched categories.

\section{Continuous quantale-enriched categories}
In order to define continuous $\sQ$-categories, the first step is to find  for $\sQ$-categories an analogue of ideals (= directed lower sets) in a partially ordered set  and/or ind-objects in a locally small category.
Forward Cauchy weights will play the role.

Let $A$ be a $\sQ$-category. A net in $A$ is a map $x$ from a directed set $(E,\leq)$ to $A$. It is customary to write $x_\lambda$ for $x(\lambda)$ and to write $\{x_\lambda\}_\lambda$ for the net.

\begin{defn}\cite{Bonsangue1998,Flagg1996,Wagner1997} \label{forward cauchy} Let $A$ be a $\sQ$-category. A net $\{x_\lambda\}_\lambda$ in  $A$ is called forward Cauchy if \[\bv_\lambda\bw_{\gamma\geq \mu\geq \lambda}A(x_\mu,x_\gamma)\geq k.\]
A weight $\phi:A\oto \star$ is called forward Cauchy   if
\[\phi =\bv_\lambda\bw_{\lambda\leq \mu}A(-,x_\mu) \] for some forward Cauchy net $\{x_\lambda\}_\lambda$ in $A$. \end{defn}

Let $\{x_\lambda\}_\lambda$ be a forward Cauchy net in a $\sQ$-category $A$. An element $x\in A$ is called a \emph{liminf} (a.k.a. Yoneda limit) \cite{Bonsangue1998,Wagner1997} of $\{x_\lambda\}_\lambda$,   if for all $y\in A$, \[ A(x,y)=\bv_\lambda\bw_{\mu\geq \lambda} A(x_\mu,y).\]

We say that a $\sQ$-category $A$ is \emph{forward Cauchy cocomplete} (a.k.a. Yoneda complete) if every forward Cauchy net has a liminf.
The following conclusion, which is proved in \cite[Lemma 46]{Flagg1996} when $\sQ$ is a \emph{value quantale} (see \cite[Definition 6]{Flagg1996}) and in \cite[Theorem 5.13]{Lai2007} for the general case, implies that a $\sQ$-category $A$ is forward Cauchy cocomplete if and only if every forward Cauchy weight of $A$ has a supremum.

\begin{prop} \label{yoneda limit = sup}  Let $\{x_\lambda\}_\lambda$ be a forward Cauchy net in a $\sQ$-category $A$. An element $x$ of $A$ is a liminf of  $\{x_\lambda\}_\lambda$ if and only if $x$ is a supremum of the weight \[\phi=\bv_\lambda\bw_{\mu\geq\lambda}A(-,x_\mu).\] \end{prop}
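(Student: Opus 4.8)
The plan is to unwind both definitions and show that the two conditions—being a liminf of $\{x_\lambda\}_\lambda$ and being a supremum of $\phi=\bv_\lambda\bw_{\mu\geq\lambda}A(-,x_\mu)$—translate to the \emph{same} equation in $\sQ$, namely $A(x,y)=\bv_\lambda\bw_{\mu\geq\lambda}A(x_\mu,y)$ for all $y\in A$. By definition $x$ is a supremum of $\phi$ iff $A(x,y)=\bw_{z\in A}(\phi(z)\ra A(z,y))$ for all $y$, so the whole proof reduces to the identity
\[\bw_{z\in A}\Bigl(\bigl(\bv_\lambda\bw_{\mu\geq\lambda}A(z,x_\mu)\bigr)\ra A(z,y)\Bigr)=\bv_\lambda\bw_{\mu\geq\lambda}A(x_\mu,y).\]

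First I would establish the inequality $\leq$ in the display above. Fix $y\in A$. For each index $\lambda$, the element $z$ ranges in particular over the net values $x_\gamma$ with $\gamma\geq\lambda$; plugging $z=x_\gamma$ into the left-hand meet gives an upper bound $\bigl(\bv_\nu\bw_{\mu\geq\nu}A(x_\gamma,x_\mu)\bigr)\ra A(x_\gamma,y)$, and since $\bv_\nu\bw_{\mu\geq\nu}A(x_\gamma,x_\mu)\geq\bw_{\mu\geq\gamma}A(x_\gamma,x_\mu)$, using antitonicity of $\ra$ in its first argument and the forward Cauchy condition (which makes the relevant meets approach $k$), one bounds the left-hand side above by something cofinally close to $\bw_{\mu\geq\lambda}A(x_\mu,y)$. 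Taking the join over $\lambda$ yields $\leq$. The cleanest way to organize this is to first prove, for every $z\in A$, the inequality $\bw_{\mu\geq\lambda}A(x_\mu,y)\geq \bigl(\bv_\nu\bw_{\mu\geq\nu}A(z,x_\mu)\bigr)\with A(z,?)$-type estimates and then massage; alternatively one can cite the standard computation $\phi\circ\phi^{-1}$-style argument for forward Cauchy weights, since $\phi$ being forward Cauchy means the pair $(\phi,\psi)$ with $\psi=\bv_\lambda\bw_{\mu\geq\lambda}A(x_\mu,-)$ forms an adjunction $\phi\dashv\psi$ in $\QDist$, i.e.\ $k\leq\psi\circ\phi$ and $\phi\circ\psi\leq (\star\to\star)$, and then $\CP A(\phi,\sy(y))=\psi(y)$ follows formally.

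The reverse inequality $\geq$ is the easier half: for any fixed $\lambda$ and any $z\in A$, the definition of $\phi$ gives $\phi(z)=\bv_\nu\bw_{\mu\geq\nu}A(z,x_\mu)\geq\bw_{\mu\geq\lambda}A(z,x_\mu)$, and then $\phi(z)\ra A(z,y)\leq \bw_{\mu\geq\lambda}A(z,x_\mu)\ra A(z,y)$; choosing $z$ appropriately (or using that $A(z,x_\mu)\with A(x_\mu,y)\leq A(z,y)$, hence $\bw_{\mu\geq\lambda}A(z,x_\mu)\ra A(z,y)\geq\bw_{\mu\geq\lambda}A(x_\mu,y)$ whenever the meet on the left is realized) shows $\phi(z)\ra A(z,y)\geq\bw_{\mu\geq\lambda}A(x_\mu,y)$; taking the meet over $z$ and then the join over $\lambda$ gives $\geq$. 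Combining the two inequalities proves the identity, and hence the proposition.

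The main obstacle I anticipate is the $\leq$ direction: one must genuinely use the forward Cauchy hypothesis $\bv_\lambda\bw_{\gamma\geq\mu\geq\lambda}A(x_\mu,x_\gamma)\geq k$ to control the term $\bigl(\bv_\nu\bw_{\mu\geq\nu}A(x_\gamma,x_\mu)\bigr)\ra A(x_\gamma,y)$ and show it is cofinally below $\bw_{\mu\geq\lambda}A(x_\mu,y)$, which requires an $\varepsilon$-style cofinality argument phrased entirely via joins and the adjunction $p\with q\leq r\iff q\leq p\ra r$ rather than with genuine epsilons. The bookkeeping with the two indices $\lambda$ and $\gamma$ (and interchanging $\bv$ and $\bw$, which is only an inequality in one direction) is where care is needed. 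I would handle it by reformulating everything in $\QDist$: set $\psi:=\bv_\lambda\bw_{\mu\geq\lambda}A(x_\mu,-):\star\oto A$, verify directly that forward Cauchyness of the net is equivalent to $k\leq\psi\circ\phi$ together with the automatic $\phi\circ\psi\leq\star$, conclude $\phi\dashv\psi$, and then read off $\CP A(\phi,\sy(y))=(\sy(y))^*\circ\phi=\ldots=\psi(y)$ from the adjunction, which is exactly the liminf formula. This packages the hard cofinality estimate into the single clean statement ``$\phi$ forward Cauchy $\Rightarrow$ $\phi$ has a right adjoint distributor $\psi$,'' which can be proved once and is arguably the real content.
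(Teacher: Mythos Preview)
The paper does not actually prove this proposition; it only cites \cite[Lemma 46]{Flagg1996} for value quantales and \cite[Theorem 5.13]{Lai2007} for the general case. So there is no in-paper argument to compare against, and your direct strategy---reduce both conditions to the identity $\bw_{z}(\phi(z)\ra A(z,y))=\bv_\lambda\bw_{\mu\geq\lambda}A(x_\mu,y)$ and prove each inequality separately---is exactly what a self-contained proof must do.

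That said, there are two genuine problems in your proposal. The more serious one is the adjunction route you propose as the ``clean'' packaging: it is simply false that a forward Cauchy weight $\phi$ admits an adjoint $\psi$ in $\QDist$. Take $\sQ=\{0,1\}$ and $A=\mathbb{N}$ with its usual order; the sequence $x_n=n$ is forward Cauchy, the induced weight is $\phi\equiv 1$, and the coweight $\psi(y)=\bv_n\bw_{m\geq n}[m\leq y]$ is identically $0$. Then $\phi\circ\psi=\bv_x\phi(x)\with\psi(x)=0$, so no adjunction $\psi\dashv\phi$ (nor $\phi\dashv\psi$) holds. Forward Cauchy weights are strictly more general than Cauchy weights (the latter being exactly the right-adjoint distributors $A\oto\star$), so you cannot deduce $\CP A(\phi,\sy(y))=\psi(y)$ from an adjunction; it must be computed directly.

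The second problem is in your ``easier'' direction $\geq$: you derive $\phi(z)\ra A(z,y)\leq\bigl(\bw_{\mu\geq\lambda}A(z,x_\mu)\bigr)\ra A(z,y)$ and separately $\bigl(\bw_{\mu\geq\lambda}A(z,x_\mu)\bigr)\ra A(z,y)\geq\bw_{\mu\geq\lambda}A(x_\mu,y)$, then conclude $\phi(z)\ra A(z,y)\geq\bw_{\mu\geq\lambda}A(x_\mu,y)$. A chain $\leq\cdots\geq$ proves nothing. The conclusion is nonetheless true: for each $\nu$ pick $\gamma\geq\nu,\lambda$ by directedness and use $\bw_{\rho\geq\nu}A(z,x_\rho)\with\bw_{\mu\geq\lambda}A(x_\mu,y)\leq A(z,x_\gamma)\with A(x_\gamma,y)\leq A(z,y)$, then adjoint-transpose. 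For the $\leq$ direction your first (non-adjunction) sketch is the right idea: with $c_\lambda:=\bw_{\gamma\geq\mu\geq\lambda}A(x_\mu,x_\gamma)$ one has $\phi(x_\mu)\geq c_\lambda$ for all $\mu\geq\lambda$, hence $\bw_z(\phi(z)\ra A(z,y))\leq c_\lambda\ra\bw_{\mu\geq\lambda}A(x_\mu,y)$; multiplying by $c_\lambda$ and taking the join over $\lambda$ finishes it via the forward Cauchy hypothesis $\bv_\lambda c_\lambda\geq k$.
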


We do not know whether assigning to each $\sQ$-category $A$ the $\sQ$-category of forward Cauchy weights of $A$ gives a saturated class of weights, however, there is an easy-to-check sufficient condition which is presented in \cite{Flagg1996,Lai2007}.

A quantale is said to be \emph{continuous} if its underlying complete lattice is continuous. The following conclusion  is proved in \cite[Proposition 13]{Flagg1996} when $\sQ$ is a  value quantale  (which is necessarily integral and continuous)  and  in \cite[Theorem 6.5]{Lai2007} for the version stated below.

\begin{prop} \label{C is saturated} Let $\sQ$ be an integral and continuous quantale. Then,  assigning to each $\sQ$-category $A$   the $\sQ$-category \[\CC A:=\{\phi\in\CP A\mid \phi~\text{is forward Cauchy}\}\]
defines a saturated class of weights on $\QCat$, which is denoted by $\CC$.   \end{prop}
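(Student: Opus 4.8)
The plan is to verify the conditions characterizing a saturated class of weights for the assignment $A \mapsto \CC A$: that $\CC A$ is a full sub-$\sQ$-category of $\CP A$, that the Yoneda embedding $\sy_A : A \to \CP A$ factors through $\CC A$, and that for each $\Phi \in \CC\CC A$ the supremum in $\CP A$ of $\Phi \circ \vep_A^* : \CP A \oto \CC A \oto \star$ lies in $\CC A$; together with functoriality of $\CC$, i.e. that $\CP f$ restricts to a $\sQ$-functor $\CC A \to \CC B$ for every $\sQ$-functor $f : A \to B$. The first is immediate from the definition of $\CC A$, whence the inclusions $\vep_A$ are automatically fully faithful. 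For the second, for each $x \in A$ the constant net on a one-point directed set is forward Cauchy and generates the weight $A(-,x) = \sy_A(x)$, so $\sy_A$ indeed factors through $\CC A$. For functoriality, a $\sQ$-functor $f$ sends a forward Cauchy net $\{x_\lambda\}_\lambda$ of $A$ to a forward Cauchy net $\{f(x_\lambda)\}_\lambda$ of $B$, because $A(x_\mu, x_\gamma) \leq B(f x_\mu, f x_\gamma)$ forces the inequality $\bv_\lambda \bw_{\gamma \geq \mu \geq \lambda} A(x_\mu, x_\gamma) \geq k$ to be preserved; and $\CP f$, being a left adjoint $\sQ$-functor and hence preserving joins and tensors, carries the weight generated by $\{x_\lambda\}_\lambda$ to the weight generated by $\{f(x_\lambda)\}_\lambda$. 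The same bookkeeping yields naturality of $(\vep_A)_A$ and the identity $\vep \circ \se = \sy$.

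The substance lies in the closure condition. Since $\CP A$ is cocomplete, hence forward Cauchy cocomplete, Proposition \ref{yoneda limit = sup} allows it to be restated as: \emph{the liminf, computed in $\CP A$, of a forward Cauchy net $\{\phi_\lambda\}_\lambda$ of forward Cauchy weights of $A$ is again a forward Cauchy weight of $A$}. To prove this I would construct an explicit forward Cauchy net in $A$ whose liminf in $\CP A$ --- equivalently, by Proposition \ref{yoneda limit = sup}, whose generated weight --- coincides with the liminf of $\{\phi_\lambda\}_\lambda$. Concretely: for each $\lambda$ choose a forward Cauchy net $\{x^\lambda_i\}_{i \in E_\lambda}$ of $A$ generating $\phi_\lambda$, and then assemble a ``diagonal'' net in $A$ indexed by a directed set built out of the $\lambda$'s, the $E_\lambda$'s, and --- crucially --- the way-below relation $\ll$ on the complete lattice $\sQ$. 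Continuity of $\sQ$ is used here to guarantee both that there are enough elements way-below $k$ and that $\ll$ interpolates, which is exactly what makes the Cauchy estimates close; integrality ($k = \top$) ensures that the threshold $k$ in the definition of forward Cauchy survives the passage from the individual nets $\{x^\lambda_i\}_i$ and from the Cauchy condition for $\{\phi_\lambda\}_\lambda$ --- unwound through $\CC A(\phi_\mu, \phi_\gamma) = \CP A(\phi_\mu, \phi_\gamma) = \bw_x \phi_\mu(x) \ra \phi_\gamma(x)$ --- down to the single diagonal net. Verifying that this net is forward Cauchy and that it generates the target weight is then a matter of interchanging the relevant suprema and infima.

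The diagonal net construction is where I expect the difficulty to concentrate: both the choice of the correct directed index set and the verification of its two properties (being forward Cauchy, and generating the right weight) are delicate, and this is exactly the step that genuinely requires $\sQ$ to be integral and continuous --- consistently with the fact that in the literature the statement is established only under these hypotheses, the value-quantale case in \cite{Flagg1996} and the present formulation in \cite{Lai2007}.
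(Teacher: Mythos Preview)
The paper does not prove this proposition at all: it merely records the result and cites \cite[Proposition~13]{Flagg1996} for the value-quantale case and \cite[Theorem~6.5]{Lai2007} for the version stated, so there is no in-paper argument to compare your proposal against. Your outline is correct and is essentially the strategy of those references: the routine verifications (fullness of $\CC A$ in $\CP A$, factorization of $\sy_A$ via constant nets, naturality) are as you say, and the only substantive step is indeed the closure of $\CC A$ under liminfs of forward Cauchy nets of forward Cauchy weights, handled by a diagonal-net construction that uses continuity of $\sQ$ for interpolation of $\ll$ and integrality to keep the Cauchy thresholds at the unit.

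One small correction in your functoriality paragraph: the reason $\CP f$ sends the weight generated by $\{x_\lambda\}$ to the weight generated by $\{f(x_\lambda)\}$ is not that $\CP f$ ``preserves joins and tensors'' --- the defining expression $\bv_\lambda\bw_{\mu\geq\lambda}\sy_A(x_\mu)$ contains a \emph{meet}, which a left adjoint has no reason to preserve. The clean argument is either a direct computation (using the Cauchy condition together with continuity and integrality of $\sQ$ to push the estimate through), or the observation that the generated weight is the liminf of the net $\{\sy_A(x_\lambda)\}$ in $\CP A$, i.e.\ a weighted colimit, which $\CP f$ does preserve. This is a local fix; the overall plan is sound and matches the literature you cite.
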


\begin{con} \label{convention} When talking about forward Cauchy weights, if not otherwise specified, we always assume that $(\sQ,\with ,k)$ is   continuous, commutative and integral. For such a quantale, the class $\CC$ of forward Cauchy weights is saturated and the category of $\CC$-algebras and $\CC$-homomorphisms is exactly the category of separated and forward Cauchy cocomplete $\sQ$-categories and  $\sQ$-functors that preserve liminf of forward Cauchy nets. \end{con}

Letting $\CT=\CC$ in Definition \ref{T-continuous} we obtain the notion of continuous $\sQ$-categories. Explicitly,

\begin{defn}\label{continuous Qcat} \cite{KW2011} A $\sQ$-category $A$  is  said to be  continuous if   it is a $\CC$-continuous $\CC$-algebra; that is, if $A$ is separated and there is a string of adjoint $\sQ$-functors: \[{\sf t}_A\dashv{\sup}_A\dashv \se_A:A\to\CC A.\] 
 \end{defn}

When $\sQ$ is the two-element  Boolean algebra $\{0,1\}$, a continuous $\sQ$-category is exactly a continuous dcpo.
It is well-known that a completely distributive lattice is necessarily a continuous lattice, so it is natural to ask:

\begin{ques} \label{ques} Is every completely distributive $\sQ$-category  continuous   in the sense of Definition \ref{continuous Qcat}? \end{ques}

As we shall  see  in Section \ref{last section}, the answer depends on the structure of the truth-values, i.e., the structure of the quantale $\sQ$. A sufficient and necessary condition will be given when $\sQ$ is  the   interval $[0,1]$ equipped with a continuous t-norm.

\begin{prop}The following statements are equivalent: \begin{enumerate}[label=\rm(\arabic*)]\setlength{\itemsep}{0pt}
\item Every completely distributive $\sQ$-category is continuous.
\item $\CP A$ is   continuous   for every $\sQ$-category $A$.
\end{enumerate}\end{prop}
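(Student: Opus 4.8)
The plan is to read off both implications from Corollary \ref{continuity=retract} and Proposition \ref{retract of continuous}, applied first with $\CT=\CP$ and then with $\CT=\CC$, together with the observation that $\CP$-{\sf Alg} sits inside $\CC$-{\sf Alg} as a subcategory (legitimate here, since by Proposition \ref{C is saturated} and Convention \ref{convention} the class $\CC$ of forward Cauchy weights is saturated).

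For $(1)\Rightarrow(2)$ I would invoke Proposition \ref{TA is T-continuous} with $\CT=\CP$: it says that $\CP A$ is $\CP$-continuous, i.e.\ completely distributive, for every $\sQ$-category $A$. Hypothesis (1) then immediately gives that $\CP A$ is continuous.

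For $(2)\Rightarrow(1)$, let $A$ be a completely distributive $\sQ$-category. Being a $\CP$-continuous $\CP$-algebra, $A$ is --- by Corollary \ref{continuity=retract} with $\CT=\CP$ --- a retract of $\CP A$ in $\CP$-{\sf Alg}; fix $\sQ$-functors $s\colon A\to\CP A$ and $r\colon\CP A\to A$, both $\CP$-homomorphisms, with $r\circ s=1_A$. Since every $\CP$-algebra is a $\CC$-algebra and every $\CP$-homomorphism is a $\CC$-homomorphism, this same data exhibits $A$ as a retract of $\CP A$ inside $\CC$-{\sf Alg}. Now (2) tells us $\CP A$ is continuous, i.e.\ a $\CC$-continuous $\CC$-algebra, so Proposition \ref{retract of continuous} applied with $\CT=\CC$ yields that $A$ is $\CC$-continuous, that is, continuous. (Separatedness of $A$ is automatic, as $\CP$-algebras are separated.)

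The only step that needs care --- and the place where a careless argument could go astray --- is the change of ambient category: one must notice that the retraction witnessing complete distributivity of $A$ lives not merely in $\QCat$ but in $\CP$-{\sf Alg}, hence in $\CC$-{\sf Alg}, which is precisely what makes Proposition \ref{retract of continuous} (a statement about retracts computed in $\CC$-{\sf Alg}, not in $\QCat$) applicable. Past that bit of bookkeeping the argument is purely formal, with no genuine obstacle.
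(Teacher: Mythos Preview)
Your proof is correct and follows essentially the same approach as the paper's: for $(1)\Rightarrow(2)$ the paper simply says ``trivial'' (since $\CP A$ is completely distributive), and for $(2)\Rightarrow(1)$ it uses exactly the retract-transfer argument you describe, passing from $\QSup=\CP$-{\sf Alg} to $\CC$-{\sf Alg} and invoking Proposition \ref{retract of continuous}. Your extra care in highlighting that the retraction lives in $\CP$-{\sf Alg} and hence in $\CC$-{\sf Alg} is well placed and matches the paper's one-line justification.
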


\begin{proof} That $(1)$ implies $(2)$ is trivial. Conversely, let $A$ be a completely distributive $\sQ$-category. From Corollary \ref{continuity=retract} it follows that $A$ is a retract of $\CP A$ in $\QSup$, hence a retract of $\CP A$ in the category of $\CC$-algebras. Since $\CP A$ is continuous by assumption, then so is $A$ by Proposition \ref{retract of continuous}. \end{proof}


Given a cocomplete $\sQ$-category $A$, denote the set of all ideals of the complete lattice $(A,\pleq)$ by ${\sf Idl}(A)$. Since each ideal $D$ of $(A,\pleq)$ can be seen as a forward Cauchy net of $A$,   \[\Lambda(D):=\bv_{d\in D}A(-,d)\] is then a forward Cauchy weight. Conversely, given a forward Cauchy weight $\phi$ of $A$, \[\Ga(\phi):=\{x\in  A\mid\phi(x)\geq k\}\] is an ideal of $(A,\pleq)$.

 \begin{prop}\label{ideals and directed sets}
Let $A$ be a separated  complete $\sQ$-category. Then   \[\Lambda:({\sf Idl}(A),\subseteq)\to(\CC A,\leq)\] is a  left adjoint and a left inverse of \[\Ga:(\CC A,\leq)\to({\sf Idl}(A),\subseteq).\] Moreover, $\sup_A\phi=\bv\Ga(\phi)$ for each $\phi\in\CC A$.
 \end{prop}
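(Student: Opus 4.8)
The plan is to establish, in order, that both maps are well defined and monotone, that $\Lambda\dashv\Ga$, that $\Lambda\circ\Ga=1_{\CC A}$, and finally the formula $\sup_A\phi=\bv\Ga(\phi)$; the third assertion is the substantive one. Well-definedness is already contained in the discussion preceding the statement (an ideal $D$, being directed by $\pleq$, is a forward Cauchy net of $A$ whose generated weight is $\Lambda(D)$, and $\Ga(\phi)$ is an ideal whenever $\phi$ is forward Cauchy). Writing $\Lambda(D)=\bv_{d\in D}\sy_A(d)$ for the join in $\CP A$ (computed pointwise), monotonicity of $\Lambda$ and of $\Ga$ is immediate. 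For the adjunction I would prove $\Lambda(D)\leq\phi\iff D\subseteq\Ga(\phi)$ for every ideal $D$ and every $\phi\in\CC A$: since $\Lambda(D)$ is a join in $\CP A$, the left-hand side holds iff $\sy_A(d)\pleq\phi$ in $\CP A$ for every $d\in D$, and by the Yoneda lemma $\CP A(\sy_A(d),\phi)=\phi(d)$, so $\sy_A(d)\pleq\phi$ says exactly that $k\leq\phi(d)$, i.e.\ that $d\in\Ga(\phi)$. In particular the counit of this adjunction is the inequality $\Lambda(\Ga(\phi))\leq\phi$.

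The substantive point is the reverse inequality $\phi\leq\Lambda(\Ga(\phi))$ for $\phi\in\CC A$. Fix a forward Cauchy net $\{x_\lambda\}_\lambda$ generating $\phi$ and set $\phi_\lambda:=\bw_{\mu\geq\lambda}\sy_A(x_\mu)\in\CP A$, so $\phi=\bv_\lambda\phi_\lambda$. Being separated and complete, $A$ is cocomplete, so $\sy_A$ has a left adjoint $\sup_A:\CP A\to A$ with $\sup_A\circ\sy_A=1_A$; put $d_\lambda:=\sup_A\phi_\lambda$. The unit of $\sup_A\dashv\sy_A$ gives $\phi_\lambda\pleq\sy_A(d_\lambda)$, that is $\phi_\lambda(z)\leq A(z,d_\lambda)$ for all $z$. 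Moreover $d_\lambda\in\Ga(\phi)$: since $\phi_\lambda\pleq\sy_A(x_\mu)$ for every $\mu\geq\lambda$ (a meet lies below each of its members), we get $d_\lambda=\sup_A\phi_\lambda\pleq\sup_A\sy_A(x_\mu)=x_\mu$, hence $\phi_\lambda(d_\lambda)=\bw_{\mu\geq\lambda}A(d_\lambda,x_\mu)\geq k$, and so $\phi(d_\lambda)\geq\phi_\lambda(d_\lambda)\geq k$. Combining the two facts,
\[\phi=\bv_\lambda\phi_\lambda\leq\bv_\lambda\sy_A(d_\lambda)\leq\bv_{d\in\Ga(\phi)}\sy_A(d)=\Lambda(\Ga(\phi)),\]
and with the counit this forces $\Lambda(\Ga(\phi))=\phi$, weights being compared pointwise.

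The last claim is then formal: by the previous step $\sup_A\phi=\sup_A\Lambda(\Ga(\phi))$, and since the left adjoint $\sup_A$ preserves the join $\Lambda(D)=\bv_{d\in D}\sy_A(d)$ while $\sup_A\sy_A(d)=d$, we obtain $\sup_A\phi=\bv_{d\in\Ga(\phi)}\sup_A\sy_A(d)=\bv\Ga(\phi)$. The one step I expect to need care is the claim $d_\lambda\in\Ga(\phi)$: intuitively $d_\lambda$ is the supremum in $A$ of the lower bounds of the $\lambda$-tail $\bw_{\mu\geq\lambda}\sy_A(x_\mu)$, so it behaves like an infimum of that tail and therefore lands inside the ideal $\Ga(\phi)$, even though the net members $x_\mu$ themselves need not.
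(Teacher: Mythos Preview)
Your proof is correct and follows essentially the same route as the paper's: both hinge on taking, for a generating forward Cauchy net $\{x_\lambda\}$, the tail infima $d_\lambda=\bw_{\mu\geq\lambda}x_\mu$ in $(A,\pleq)$ (your $d_\lambda=\sup_A\bw_{\mu\geq\lambda}\sy_A(x_\mu)$ is exactly this element, since by Proposition~\ref{comp by cote}(2) the Yoneda embedding of a complete $\sQ$-category preserves meets) and observing that these lie in $\Ga(\phi)$ and recover $\phi$ as $\bv_\lambda\sy_A(d_\lambda)$. Your deduction of $\sup_A\phi=\bv\Ga(\phi)$ from $\phi=\Lambda\Ga(\phi)$ and preservation of joins by the left adjoint $\sup_A$ is in fact a bit cleaner than the paper's two-inequality argument.
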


\begin{proof} Suppose $D$ is an ideal in $(A,\pleq)$ and
$\phi$ is a forward Cauchy weight of $A$. Then
\[D\subseteq\Ga(\phi)\iff\forall d\in D,\phi(d)\geq k\iff
\Lambda(D)\leq\phi,\]
which implies $\Lambda\dashv \Ga$.

Now we check that for each   forward Cauchy weight $\phi$ of $A$, $\Lambda\Ga(\phi)=\phi$. On one hand, since $\Lambda$ is left adjoint to $\Ga$, it follows that $\Lambda\Ga(\phi)\leq\phi$. On the other hand, by assumption there is a forward Cauchy net $\{x_\lambda\}_{\lambda\in E}$  in $A$ such that \[\phi(x)=\bv_{\lambda\in  E}\bw_{\lambda\leq \mu} A(x,x_\mu).\]  Let \[D_\phi:=\Big\{\bw_{\mu\geq \lambda}x_\mu\mid \lambda\in E\Big\},\] where $\bw_{\mu\geq \lambda}x_\mu$ denotes the meet of $\{x_\mu\mid \mu\geq \lambda\}$ in the complete lattice $(A,\pleq)$.
Then $D_\phi$ is a directed subset of $(A,\pleq)$ and \(\phi(x)=\bv_{d\in D_\phi} A(x,d),\) so $D_\phi\subseteq \Ga(\phi)$, and consequently,   $\phi\leq\Lambda\Ga(\phi)$.

Finally, we check that  $\sup_A\phi=\bv\Ga(\phi)$ for each $\phi\in\CC A$. Since $\sup_A\phi=\bv_{x\in A}\phi(x)\otimes x$, it follows that $\sup_A\phi\geq\bv\Ga(\phi)$. Conversely, since \[{\sup}_A\phi={\sup}_A\big(\bv_{d\in D_\phi} A(-,d)\big)=\bv D_\phi,\] then $\sup_A\phi\leq\bv\Ga(\phi)$.
\end{proof}

\begin{cor}\label{Scott continuous} Let $A,B$ be separated complete $\sQ$-categories and $f: A\to B$  be a $\sQ$-functor. Then, $f$ preserves liminf of forward Cauchy nets if and only if \[f: (A,\pleq)\to (B,\pleq)\] is Scott continuous (i.e., preserves  directed joins).\end{cor}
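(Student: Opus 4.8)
The plan is to reduce the statement to Proposition \ref{ideals and directed sets}, which already sets up a tight correspondence between forward Cauchy weights and ideals. First I would recall that by Proposition \ref{yoneda limit = sup}, $f$ preserves liminf of forward Cauchy nets if and only if $f$ preserves suprema of forward Cauchy weights, i.e., $f(\sup_A\phi)=\sup_B(\CC f(\phi))$ for every $\phi\in\CC A$. On the lattice side, a $\sQ$-functor $f:A\to B$ is a monotone map $(A,\pleq)\to(B,\pleq)$, so ``Scott continuous'' here means it preserves existing directed joins; since $A$ is complete, all directed joins exist.

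For the forward direction, assume $f$ preserves liminfs. Given a directed subset $D\subseteq(A,\pleq)$, form the forward Cauchy weight $\phi=\Lambda(D)=\bv_{d\in D}A(-,d)$. By the last clause of Proposition \ref{ideals and directed sets}, $\sup_A\phi=\bv\Ga(\phi)=\bv D$ (using $D\subseteq\Ga(\phi)$ and $\bv\Ga(\phi)=\bv D$ since $\Ga(\phi)$ is the ideal generated by $D$). A short computation shows $\CC f(\phi)=\phi\circ f^*=\bv_{d\in D}B(-,f(d))=\Lambda(f(D))$, whence $\sup_B(\CC f(\phi))=\bv f(D)$. Since $f$ preserves suprema of forward Cauchy weights, $f(\bv D)=f(\sup_A\phi)=\sup_B(\CC f(\phi))=\bv f(D)$, so $f$ is Scott continuous.

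For the converse, assume $f$ is Scott continuous. Let $\phi\in\CC A$ be arbitrary; by definition $\phi$ arises from a forward Cauchy net $\{x_\lambda\}_{\lambda\in E}$, and in the proof of Proposition \ref{ideals and directed sets} one has the directed set $D_\phi=\{\bw_{\mu\geq\lambda}x_\mu\mid\lambda\in E\}$ with $\phi=\bv_{d\in D_\phi}A(-,d)$ and $\sup_A\phi=\bv D_\phi$. Then $\CC f(\phi)=\bv_{d\in D_\phi}B(-,f(d))$ and, since $f(D_\phi)$ is directed, $\sup_B(\CC f(\phi))=\bv f(D_\phi)=f(\bv D_\phi)=f(\sup_A\phi)$, using Scott continuity in the middle step. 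Hence $f$ preserves suprema of forward Cauchy weights, equivalently liminfs of forward Cauchy nets. The main obstacle I anticipate is the bookkeeping in the converse: one must be careful that $f(D_\phi)$ is genuinely directed (it is, being the monotone image of a directed set) and that the meets $\bw_{\mu\geq\lambda}x_\mu$ are preserved only in the weak sense needed — in fact we do not need $f$ to preserve these meets at all, since we only use Scott continuity of $f$ on the directed set $D_\phi$ and the identity $\CC f(\bv_{d}A(-,d))=\bv_d B(-,f(d))$, which follows from cocontinuity of the distributor composition $-\circ f^*$.
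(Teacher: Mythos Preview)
Your proposal is correct and follows exactly the route the paper intends: the corollary is stated without proof immediately after Proposition~\ref{ideals and directed sets}, and your argument is the natural elaboration, using $\Lambda$ to pass from directed sets to forward Cauchy weights and the identity $\sup_A\phi=\bigvee\Gamma(\phi)$ (together with the directed set $D_\phi$ extracted in that proof) to go back. One small wording quibble: in the forward direction you justify $\bigvee\Gamma(\phi)=\bigvee D$ by saying ``$\Gamma(\phi)$ is the ideal generated by $D$'', which need not hold literally; the equality follows more cleanly from $\sup_A\Lambda(D)=\bigvee D$ (a direct computation using $A(\bigvee D,y)=\bigwedge_{d\in D}A(d,y)$) combined with $\sup_A\phi=\bigvee\Gamma(\phi)$, but the conclusion is unaffected.
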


\begin{lem}\label{CA closed under meets}
For each complete $\sQ$-category $A$, the set   $\CC A$ of forward Cauchy weights of $A$  is closed in $\sQ^A$ (pointwise ordered)  under   meets and directed joins.  \end{lem}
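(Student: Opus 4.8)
The plan is to verify the two closure properties separately, exploiting the characterization of forward Cauchy weights as liminfs of directed nets obtained via the underlying order, much as in the proof of Proposition~\ref{ideals and directed sets}.

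\emph{Closure under directed joins.} Let $\{\phi_i\}_{i\in I}$ be a directed family in $\CC A$ (directed in the pointwise order, equivalently in $(\CC A,\leq)$). Since $A$ is complete, each $\phi_i$ has a supremum in $A$; write $a_i:=\sup_A\phi_i$, so that, by Proposition~\ref{ideals and directed sets}, $a_i=\bv\Ga(\phi_i)$ and $\phi_i=\Lambda(\Ga(\phi_i))=\bv_{d\in\Ga(\phi_i)}A(-,d)$. The family $\{a_i\}_{i\in I}$ is directed in $(A,\pleq)$ because $\Lambda$ is an order isomorphism onto $\CC A$ with inverse $\Ga$, and the assignment $\phi\mapsto a_\phi:=\bv\Ga(\phi)$ is monotone. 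I then claim that $\bv_i\phi_i$ (the pointwise join in $\sQ^A$) equals $A(-,a)$ with $a:=\bv_i a_i$, which would exhibit $\bv_i\phi_i$ as a representable weight, hence a fortiori forward Cauchy (it is the liminf of the eventually constant net at $a$). For the inequality $\bv_i\phi_i\leq A(-,a)$: each $\phi_i\leq A(-,a_i)\leq A(-,a)$, using $\phi_i=\bv_{d\in\Ga(\phi_i)}A(-,d)$ and $d\pleq a_i\pleq a$. For the reverse inequality one uses tensors: in a cocomplete $\sQ$-category, $A(-,a)$ evaluated suitably together with the formula $\sup_A\phi=\bv_x\phi(x)\otimes x$ and the fact that directed joins in $(A,\pleq)$ are preserved by $\sup_A$ via Corollary~\ref{Scott continuous}; more directly, since $a=\bv_i a_i$ is a directed join, $A(x,a)=\bigwedge$-type expressions do not immediately help, so instead I would argue that for each $x$, $A(x,a)\leq \bv_i A(x,a_i)$ need not hold in general — so this route is the delicate point. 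The cleaner argument: a directed join of representables is again representable precisely when it equals the representable at the join, and this holds here because the net $\{a_i\}$ is forward Cauchy (being directed) with liminf $a$, so by Proposition~\ref{yoneda limit = sup} the weight $\bv_i A(-,a_i)$ has supremum $a$; but $\bv_i A(-,a_i)\leq\bv_i\phi_i$ only if $\phi_i\geq A(-,a_i)$, which is false in general. Hence I abandon the ``representable'' claim and instead show directly that $\bv_i\phi_i$ is the liminf weight of a forward Cauchy net: take the directed index set $E=\bigsqcup_i E_i$ suitably glued (or use $\{a_i\}$ itself when each $\phi_i$ is already representable) — the honest route is to use that $\phi_i=\bv_{d\in\Ga(\phi_i)}A(-,d)$ and that $\bigcup_i\Ga(\phi_i)$ is a directed subset of $(A,\pleq)$, whence $\bv_i\phi_i=\bv_{d\in\bigcup_i\Ga(\phi_i)}A(-,d)=\Lambda\big(\bigcup_i\Ga(\phi_i)\big)$ is forward Cauchy by definition.

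\emph{Closure under meets.} Let $\{\phi_i\}_{i\in I}$ be an arbitrary family in $\CC A$ and put $\phi:=\bw_i\phi_i$, the pointwise meet in $\sQ^A$. By the above description, $\phi=\bw_i\Lambda(\Ga(\phi_i))$. I would show $\Ga(\phi)=\bigcap_i\Ga(\phi_i)$ is an ideal of $(A,\pleq)$: it is a lower set (intersection of lower sets) and it is directed because a finite subset $\{x_1,\dots,x_n\}\subseteq\bigcap_i\Ga(\phi_i)$ has, within each $\Ga(\phi_i)$, an upper bound, but one needs a \emph{common} upper bound lying in all $\Ga(\phi_i)$ simultaneously; this is where I use that each $\Ga(\phi_i)=D_{\phi_i}$ may be taken of the special form $\{\bw_{\mu\geq\lambda}x^i_\mu\}$ from the proof of Proposition~\ref{ideals and directed sets}, but the cleanest argument is: $\phi\leq\phi_i$ forces $\sup_A\phi\pleq\sup_A\phi_i=a_i$, and conversely I claim $\phi=A(-,\bw_i a_i)$ — no, again $\phi$ need not be representable. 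The correct and simplest line: since $\Ga$ is monotone and $\Lambda\Ga=\mathrm{id}$ on $\CC A$, and since $\Lambda$ preserves meets (it is a right adjoint? — in fact $\Lambda$ is a \emph{left} adjoint by Proposition~\ref{ideals and directed sets}, so it preserves joins, not meets), I instead verify directly that $\bigcap_i\Ga(\phi_i)$ is directed using that $A$ is complete: given $x,y\in\bigcap_i\Ga(\phi_i)$, the element $x\vee y$ of $(A,\pleq)$ satisfies $A(-,x\vee y)\geq A(-,x)$, and since $x,y\in\Ga(\phi_i)$ means $\phi_i(x),\phi_i(y)\geq k$, while $\phi_i$ forward Cauchy implies $\Ga(\phi_i)$ is directed, $x$ and $y$ have an upper bound $z_i\in\Ga(\phi_i)$; taking $z$ to be any upper bound of all the $z_i$ does not keep us inside the intersection. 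So the honest fix is to prove the meet-closure by the net description: the pointwise meet of liminf weights is again a liminf weight because meets commute appropriately with the $\bv_\lambda\bw_{\mu\geq\lambda}$ expression — this is exactly the technical heart.

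\textbf{Main obstacle.} The serious difficulty is meet-closure: unlike directed joins, which merge cleanly at the level of the ideals $\Ga(\phi_i)$ (their union is directed, giving a forward Cauchy weight essentially for free), arbitrary meets of forward Cauchy weights require showing that $\bigcap_i\Ga(\phi_i)$ is still \emph{directed}, and directedness is not preserved by intersections in a general poset. I expect the actual proof to circumvent this by using completeness of $A$ to replace each $\Ga(\phi_i)$ by the cofinal directed set $D_{\phi_i}=\{\bw_{\mu\geq\lambda}x^i_\mu:\lambda\in E_i\}$ of eventual meets, and then to build from the product directed set $\prod_i E_i$ (or a cofinal piece of it) an explicit forward Cauchy net whose liminf weight is $\bw_i\phi_i$ — verifying the forward Cauchy condition $\bv\bw_{\gamma\geq\mu\geq\lambda}A(x_\mu,x_\gamma)\geq k$ for this net is the one genuinely computational step, and it is where continuity and integrality of $\sQ$ (Convention~\ref{convention}) get used.
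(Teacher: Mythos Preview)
Your directed-joins argument, once you discard the false starts about representables and settle on $\bigcup_i\Ga(\phi_i)$ being an ideal with $\bv_i\phi_i=\Lambda\big(\bigcup_i\Ga(\phi_i)\big)$, is exactly the paper's argument.

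For meets, however, you are circling the right ingredients without landing on the key computation, and your proposed route (build a net indexed by $\prod_i E_i$ and verify the forward Cauchy inequality directly) is both harder and left unexecuted. The paper's argument is a two-line interchange: by continuity of the underlying lattice of $\sQ$, meets distribute over directed joins, so for each $x\in A$,
\[
\bw_{i\in I}\phi_i(x)=\bw_{i\in I}\bv_{d\in\Ga(\phi_i)}A(x,d)=\bv_{s\in\prod_{i}\Ga(\phi_i)}\bw_{i\in I}A(x,s(i))=\bv_{s\in\prod_{i}\Ga(\phi_i)}A\Big(x,\bw_{i\in I}s(i)\Big),
\]
the last equality using completeness of $A$ via Proposition~\ref{comp by cote}(2). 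The set $\big\{\bw_{i}s(i):s\in\prod_i\Ga(\phi_i)\big\}$ is directed in $(A,\pleq)$ (it is the image of a directed product under a monotone map), so $\bw_i\phi_i$ is already displayed in the form $\Lambda(D)$ for a directed $D$ and is therefore forward Cauchy---no net-level Cauchy verification is needed.

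Two things you missed: first, the role of continuity of $\sQ$ is precisely to license the meet--directed-join interchange $\bw_i\bv_d=\bv_s\bw_i$, not to help with a Cauchy estimate; second, the intersection $\bigcap_i\Ga(\phi_i)$ is a red herring---the paper never uses it, and indeed it need not coincide with the directed set of componentwise meets that actually works.
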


\begin{proof} Let $\{\phi_i\}_{i\in I}$ be a subset of $\CC A$. Then, for all $x\in A$, by continuity of $\sQ$  we have
\[\bw_{i\in I}\phi_i(x)=\bw_{i\in I}\bv_{d\in\Ga(\phi_i)}A(x,d)
=\bv_{s\in\prod\limits_{i\in I}\Ga(\phi_i)}\bw_{i\in I}A(x,s(i))
=\bv_{s\in\prod\limits_{i\in I}\Ga(\phi_i)}A\big(x,\bw_{i\in I}s(i)\big).\]
Since \[\left\{\bw_{i\in I}s(i)\mid s\in\prod\limits_{i\in I}\Ga(\phi_i)\right\}\] is a directed set of $(A,\pleq)$, it follows that $\bw_{i\in I}\phi_i$ is a forward Cauchy weight of $A$, hence belongs to $\CC A$.

Let $\{\phi_i\}_i $ be a directed set of $(\CC A,\leq)$. Then  $\{\Ga(\phi_i)\}_i $ is a directed family in $({\sf Idl}(A),\subseteq)$ and $D=\bigcup_i\Ga(\phi_i)$ is an ideal in $(A,\pleq)$. Since
\[\bv_{d\in D} A(-,d)=\bv_i\phi_i,\] it follows that $\bv_i\phi_i\in\CC A$.

Therefore, $\CC A$ is closed in $\sQ^A$  under meets and directed joins. \end{proof}

\begin{prop}\label{t in Q}
A separated complete $\sQ$-category $A$ is continuous if and only if  \begin{enumerate}[label=\rm(\arabic*)]\setlength{\itemsep}{0pt} \item $(A,\pleq)$ is a continuous lattice; \item for each $x\in A$ and each forward Cauchy weight $\phi$ of $A$, \[A(x,{\sup}_A\phi)=\bw_{y\ll x}\phi(y),\] where $\ll$ denotes the way below relation (see \cite{Gierz2003} for definition) in $(A,\pleq)$. \end{enumerate}
\end{prop}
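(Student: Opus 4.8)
The plan is to guess the left adjoint explicitly as ${\sf t}_A(x)=\Lambda(\{y\in A\mid y\ll x\})$ and to reduce everything to two elementary identities. The first is that $\sup_A\Lambda(D)=\bv D$ for every ideal $D$ of $(A,\pleq)$, which follows from Proposition~\ref{ideals and directed sets} ($\sup_A\Lambda(D)=\bv\Ga\Lambda(D)$) together with $D\subseteq\Ga\Lambda(D)\subseteq\da(\bv D)$. The second is that, writing $\psi_x:=\Lambda(\{y\mid y\ll x\})=\bv_{y\ll x}\sy_A(y)$ (a forward Cauchy weight, since $\{y\mid y\ll x\}$ is an ideal of the complete lattice $(A,\pleq)$),
\[\CC A(\psi_x,\phi)=\bw_{y\ll x}\phi(y)\qquad\text{for all }\phi\in\CC A,\]
which one reads off from $\CC A(\cdot,\cdot)=\CP A(\cdot,\cdot)$ on $\CC A$, $\CP A(\bv_i\rho_i,\phi)=\bw_i\CP A(\rho_i,\phi)$ (residuation sends joins in the first variable to meets), and $\CP A(\sy_A(y),\phi)=\phi(y)$ (Yoneda). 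I would also record that, $A$ being separated and complete (hence forward Cauchy cocomplete), $\sup_A\dashv\se_A$ holds automatically, so ``$A$ continuous'' means exactly that $\sup_A$ has a further, necessarily unique, left adjoint.

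For the ``if'' direction I would simply set ${\sf t}_A(x):=\psi_x\in\CC A$; the second identity and hypothesis (2) give $\CC A({\sf t}_A(x),\phi)=A(x,\sup_A\phi)$ for all $x,\phi$, and a routine check with the $\sQ$-category axioms upgrades this to the statement that ${\sf t}_A$ is a $\sQ$-functor with $({\sf t}_A)_*=(\sup_A)^*$, i.e.\ ${\sf t}_A\dashv\sup_A$; hence ${\sf t}_A\dashv\sup_A\dashv\se_A$ and $A$ is continuous.

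For the ``only if'' direction I would first prove (1) and then identify ${\sf t}_A$. To get (1), note that Proposition~\ref{ideals and directed sets} does not quite do it formally --- composing $\Lambda\dashv\Ga$ with ${\sf t}_A\dashv\sup_A$ fails because $\Ga\Lambda$ need not be the identity on ${\sf Idl}(A)$, so $\Ga\circ{\sf t}_A$ can strictly exceed the way-below ideal. Instead I would argue that $\CC A$ is closed in the continuous lattice $\sQ^A$ under arbitrary meets and directed joins (Lemma~\ref{CA closed under meets}), so the closure operator of $\sQ^A$ onto $\CC A$ is Scott continuous and $(\CC A,\leq)$ is itself a continuous lattice by standard domain theory \cite{Gierz2003}; since $A$ is a retract of $\CC A$ in $\CC\text{-}{\sf Alg}$ (Corollary~\ref{continuity=retract}) via maps Scott continuous on underlying posets (Corollary~\ref{Scott continuous}), $(A,\pleq)$ is a retract of a continuous lattice in the category of dcpos, hence a continuous lattice, which is (1). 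Granting (1), since ${\sf t}_A\dashv\sup_A$ one has ${\sf t}_A(x)=\bw\{\phi\in\CC A\mid x\pleq\sup_A\phi\}$ (meet in $\CC A$); here $\psi_x$ lies in the set (because $\sup_A\psi_x=\bv\{y\mid y\ll x\}=x$ by (1) and the first identity), giving ${\sf t}_A(x)\leq\psi_x$, while every $\phi$ in the set has $\phi\geq\sy_A(y)$ for each $y\ll x$ (the definition of $\ll$ places $y$ in the ideal $\Ga(\phi)$, and $\phi=\Lambda\Ga(\phi)$ by Proposition~\ref{ideals and directed sets}), giving $\phi\geq\bv_{y\ll x}\sy_A(y)=\psi_x$ and hence ${\sf t}_A(x)\geq\psi_x$. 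Thus ${\sf t}_A(x)=\psi_x$, and then $A(x,\sup_A\phi)=\CC A({\sf t}_A(x),\phi)=\bw_{y\ll x}\phi(y)$ by the second identity, which is (2). The hard part is establishing (1) in this direction; everything else is formal bookkeeping with the two identities and Proposition~\ref{ideals and directed sets}.
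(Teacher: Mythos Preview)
Your proposal is correct and follows essentially the same route as the paper: for sufficiency you take ${\sf t}_A(x)=\bv_{y\ll x}\sy_A(y)$ and verify the adjunction via the hom-identity, and for necessity you deduce (1) from Lemma~\ref{CA closed under meets} (so $(\CC A,\leq)$ is a continuous lattice) together with the retract $A\leftrightarrows\CC A$, then identify ${\sf t}_A$ with $\psi_x$ using $\sup_A{\sf t}_A(x)=x$ and the way-below relation via $\Ga$. The paper does exactly this, only more tersely; the sole cosmetic difference is that for ${\sf t}_A(x)\geq\psi_x$ the paper argues directly with $\Ga({\sf t}_A(x))$ rather than passing through the meet description ${\sf t}_A(x)=\bw\{\phi\mid x\pleq\sup_A\phi\}$, but the content is identical.
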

\begin{proof}
Sufficiency follows from  the fact that, under the assumption, the assignment $x\mapsto\bv_{y\ll x}\sy(y)$ defines a left adjoint of $\sup_A:\CC A\to A$.

Now we turn to the necessity. Since $\CC A$ is closed in $\sQ^A$  under meets and directed joins, it is a continuous lattice since so is $\sQ$ by our convention. Since $(A,\pleq)$ is a retract of $(\CC A,\pleq)$ in the category of dcpos by Corollary \ref{Scott continuous}, it follows that $(A,\pleq)$ is also a continuous lattice. This proves (1).
As for (2), it suffices to check that if $A$ is continuous, then the left adjoint ${\sf t}_A:A\to\CC A$ of $\sup_A:\CC A\to A$ is given by ${\sf t}_A(x)=\bv_{y\ll x}\sy(y)$. On one hand,
since $x={\sup}_A \bv_{y\ll x}\sy(y)$, it follows that ${\sf t}_A(x)\leq \bv_{y\ll x}\sy(y)$.  On the other hand, since $\bv\Gamma({\sf t}_A(x))=\sup_A {\sf t}_A(x)=x$ by Proposition \ref{ideals and directed sets}, it follows that every element   way below $x$ in $(A,\pleq)$ is contained in $\Ga ({\sf t}_A(x))$, so $\bv_{y\ll x}\sy(y)\leq {\sf t}_A(x)$.
\end{proof}

\begin{prop}\label{cond impl cont} Let  $A$ be a  complete $\sQ$-category. If  $(A,\pleq)$ is a continuous lattice  and  for all $p\in \sQ$, the cotensor $p\multimap -:(A,\pleq)\to(A,\pleq)$ is Scott continuous, then $A$ is continuous.
\end{prop}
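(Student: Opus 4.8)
The plan is to deduce the conclusion from Proposition \ref{t in Q}. Note first that the hypothesis ``$(A,\pleq)$ is a continuous lattice'' already makes $(A,\pleq)$ a poset, so $A$ is separated and Propositions \ref{ideals and directed sets} and \ref{t in Q} apply. Condition (1) of Proposition \ref{t in Q} is exactly part of the hypothesis, so everything reduces to verifying its condition (2): for every $x\in A$ and every forward Cauchy weight $\phi$ of $A$,
\[A(x,{\sup}_A\phi)=\bw_{y\ll x}\phi(y).\]

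To prepare for this, I would first invoke Proposition \ref{ideals and directed sets}: writing $D:=\Ga(\phi)$, which is an ideal of $(A,\pleq)$, one has ${\sup}_A\phi=\bv D$ and $\phi(y)=\bv_{d\in D}A(y,d)$ for all $y\in A$. I would also record the elementary identity $A(y,z)=\bv\{p\in\sQ\mid y\pleq p\multimap z\}$, which is immediate from the defining property of the cotensor. Then, for the inequality ``$\leq$'', take any $p\in\sQ$ with $x\pleq p\multimap\bv D$. Since $D$ is directed and $p\multimap-:(A,\pleq)\to(A,\pleq)$ is Scott continuous by hypothesis, $p\multimap\bv D=\bv_{d\in D}(p\multimap d)$ is a directed join; as $y\ll x\pleq\bv_{d\in D}(p\multimap d)$, the way-below relation supplies some $d\in D$ with $y\pleq p\multimap d$, whence $p\leq A(y,d)\leq\phi(y)$. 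Since $y\ll x$ is arbitrary, $p\leq\bw_{y\ll x}\phi(y)$, and taking the join over all admissible $p$ yields $A(x,{\sup}_A\phi)=A(x,\bv D)\leq\bw_{y\ll x}\phi(y)$.

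For the reverse inequality ``$\geq$'', which uses only the weight structure and cocompleteness, I would argue as follows. Each $y\ll x$ satisfies $y\pleq x$, and weights are monotone along $\pleq$, so $\phi(y)\leq\phi(x)$; since the index set is nonempty (it contains the bottom element of the continuous lattice $(A,\pleq)$), this gives $\bw_{y\ll x}\phi(y)\leq\phi(x)$. On the other hand, from ${\sup}_A\phi=\bv_{z\in A}\phi(z)\otimes z$ we get $\phi(x)\otimes x\pleq{\sup}_A\phi$, which by the defining property of the tensor is equivalent to $\phi(x)\leq A(x,{\sup}_A\phi)$. Combining the two, $\bw_{y\ll x}\phi(y)\leq A(x,{\sup}_A\phi)$, so condition (2) holds and Proposition \ref{t in Q} gives that $A$ is continuous.

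The only genuinely delicate point is the ``$\leq$'' half: it is there that the two standing hypotheses are consumed, and the mechanism is to convert the hom-value $A(x,{\sup}_A\phi)$ into a statement about the underlying order (via $A(y,z)=\bv\{p\mid y\pleq p\multimap z\}$), so that Scott continuity of the cotensor turns $p\multimap\bv D$ into a directed join to which the approximation property of the continuous lattice $(A,\pleq)$ can be applied. Everything else is routine bookkeeping with the tensor--cotensor adjunction and Proposition \ref{ideals and directed sets}.
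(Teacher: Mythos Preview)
Your reduction to Proposition~\ref{t in Q} and your argument for the inequality $A(x,{\sup}_A\phi)\leq\bw_{y\ll x}\phi(y)$ are correct and essentially identical to the paper's: translate $p\leq A(x,{\sup}_A\phi)$ into $x\pleq p\multimap\bv D$, invoke Scott continuity of $p\multimap-$ and the way-below relation, and conclude $p\leq\phi(y)$ for every $y\ll x$.

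The error is in your ``$\geq$'' direction. Weights $\phi:A\oto\star$ are \emph{contravariant} in the $A$-argument: the distributor condition reads $\phi(z)\with A(y,z)\leq\phi(y)$, so $y\pleq z$ implies $\phi(z)\leq\phi(y)$. Hence for $y\ll x$ (whence $y\pleq x$) one has $\phi(y)\geq\phi(x)$, not $\phi(y)\leq\phi(x)$. Consequently your chain $\bw_{y\ll x}\phi(y)\leq\phi(x)\leq A(x,{\sup}_A\phi)$ collapses at the first step: the meet is \emph{at least} $\phi(x)$, and the two facts you have, $\bw_{y\ll x}\phi(y)\geq\phi(x)$ and $\phi(x)\leq A(x,{\sup}_A\phi)$, point in incompatible directions and yield nothing.

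The paper obtains ``$\geq$'' by a different, one-line mechanism that does not factor through $\phi(x)$: since $(A,\pleq)$ is a continuous lattice, $\{y\mid y\ll x\}$ is directed with join $x$, so $\psi:=\bv_{y\ll x}\sy(y)$ is a forward Cauchy weight with ${\sup}_A\psi=x$. Then $\sQ$-functoriality of ${\sup}_A:\CC A\to A$ gives
\[
A(x,{\sup}_A\phi)=A({\sup}_A\psi,{\sup}_A\phi)\geq\CC A(\psi,\phi)=\bw_{y\ll x}\phi(y).
\]
This is the missing idea in your proposal.
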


 \begin{proof} Since  $(A,\pleq)$ is a continuous lattice,   $A$ is necessarily separated. So it remains to show that for each $x\in A$ and each forward Cauchy weight $\phi$ of $A$, \[A(x,{\sup}_A\phi)=\bw_{y\ll x}\phi(y),\] where $\ll$ denotes the way below relation in $(A,\pleq)$.

 On one hand, since $\{y\in A\mid y\ll x\}$ is a directed set with join $x$, it follows that $\bv_{y\ll x}\sy(y)$ is a forward Cauchy weight with supremum $x$. Thus, \[A(x,{\sup}_A\phi)\geq \CC A\Big(\bv_{y\ll x}\sy(y),\phi\Big)=\bw_{y\ll x}\phi(y).\]

 On the other hand,   for all $p\in\sQ$,
 \begin{align*}
 p\leq A(x,{\sup}_A\phi)&\Longrightarrow x\leq p\multimap{\sup}_A\phi\\
 &\Longrightarrow x\leq\bv_{d\in \Ga(\phi)}(p\multimap d)& (\mbox{$\sup_A\phi=\bv\Ga(\phi)$})\\
 &\Longrightarrow \forall y\ll x, \exists d\in \Ga(\phi), y\leq p\multimap d & (\mbox{$(A,\pleq)$ is continuous})\\
 &\Longrightarrow \forall y\ll x, \exists d\in \Ga(\phi), p\leq A(y,d)\\
 &\Longrightarrow  \forall y\ll x, p\leq\bv_{d\in \Ga(\phi)} A(y,d)\\
 &\Longrightarrow \forall {y\ll x}, p\leq \phi(y)\\
 &\Longrightarrow p\leq\bw_{y\ll x}\phi(y),
 \end{align*}
hence  $A(x,\sup_A\phi)\leq\bw_{y\ll x}\phi(y)$.
\end{proof}

\begin{cor}  Let $(\sQ,\with,k)$ be an integral quantale such that   the underlying lattice   is completely distributive and that for all $p\in \sQ$, $p\with-:\sQ\to \sQ$ preserves filtered meets. Then, for every $\sQ$-category $A$, the $\sQ$-category $\CPd A$  is continuous.
\end{cor}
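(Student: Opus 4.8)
The plan is to apply Proposition \ref{cond impl cont} to the complete $\sQ$-category $\CPd A$; its completeness is Example \ref{exmp of comp}(2). Two things then remain to be checked: (i) the underlying lattice $(\CPd A,\pleq)$ is a continuous lattice, and (ii) for every $p\in\sQ$ the cotensor $p\multimap-\colon(\CPd A,\pleq)\to(\CPd A,\pleq)$ is Scott continuous.

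I would dispose of (ii) by a pointwise computation. Regarding $\CPd A$ as a sub-$\sQ$-category of $\sQ^A$, the order $\pleq$ on $\CPd A$ is \emph{opposite} to the pointwise one, and $\CPd A$ is closed in $\sQ^A$ under arbitrary pointwise meets, since $A(y,y')\with\bw_i\psi_i(y)\le\bw_i\big(A(y,y')\with\psi_i(y)\big)\le\bw_i\psi_i(y')$. Hence joins in $(\CPd A,\pleq)$ --- in particular directed ones --- are computed as pointwise meets, and along a $\pleq$-directed family $\{\psi_i\}_i$ each family $\{\psi_i(x)\}_i$ is filtered in $\sQ$. As $(p\multimap\psi)(x)=p\with\psi(x)$ by Example \ref{exmp of comp}(2), the hypothesis that $p\with-\colon\sQ\to\sQ$ preserves filtered meets shows at once that $p\multimap-$ carries each $\pleq$-directed join to the corresponding $\pleq$-join.

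The substantial point is (i); here I would in fact prove the stronger assertion that $(\CPd A,\pleq)$ is a completely distributive lattice. The idea is to exhibit $\CPd A$ with its \emph{pointwise} order as a retract of $\sQ^A$ in $\Sup$: the inclusion $\CPd A\hookrightarrow\sQ^A$ preserves pointwise joins, and it has a left adjoint $\phi\mapsto\bv_{x\in A}\phi(x)\with A(x,-)$ which is at the same time a left inverse of the inclusion. (Alternatively one may use that $\CPd A$ is completely co-distributive --- since $\CPd$ is a co-KZ-doctrine --- so that $(\CPd A)^{\op}$ is a completely distributive $\sQ$-category, and then invoke the characterization of completely distributive $\sQ$-categories as the retracts of powers of $(\sQ,d_L)$ in $\QSup$; passing to underlying lattices again presents $(\CPd A,\pleq)^{\op}$ as a retract of a power of $\sQ$ in $\Sup$.) Since $\sQ$ is a completely distributive lattice by hypothesis, so is the power $\sQ^A$; complete distributivity of lattices is inherited by retracts in $\Sup$ \cite{FW1990,Wood}, so $\CPd A$ with its pointwise order is completely distributive, and therefore so is its opposite $(\CPd A,\pleq)$ by the self-duality of complete distributivity for lattices \cite[VII.1.10]{Johnstone1982}. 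In particular $(\CPd A,\pleq)$ is continuous, so Proposition \ref{cond impl cont} applies and $\CPd A$ is continuous.

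I expect (i) to be the real obstacle: continuity of $(\CPd A,\pleq)$ cannot simply be read off by dualizing the (easy) continuity of the pointwise order, since continuity of lattices is not self-dual; one must pass through complete distributivity --- which is self-dual --- and obtain it from the retract description together with the assumption that $\sQ$ itself is completely distributive. A subsidiary point to watch is that the hypothesis that $p\with-$ preserves filtered meets is used precisely for the filtered families $\{\psi_i(x)\}_i$ arising from $\pleq$-directed families in $\CPd A$.
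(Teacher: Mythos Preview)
Your proof is correct and follows the paper's strategy exactly: apply Proposition~\ref{cond impl cont} to $\CPd A$, using Example~\ref{exmp of comp}(2) for the cotensor formula and the fact that the underlying order of $\CPd A$ is opposite to the pointwise order inherited from $\sQ^A$. Your verification of (ii) is precisely what the paper has in mind.

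The only difference is in how you handle (i). You note that $\CPd A$ is closed in $\sQ^A$ under pointwise \emph{meets} and then work to obtain a retract in $\Sup$ via an explicit left adjoint (or via the co-KZ structure of $\CPd$). The paper's argument is simpler: it observes that $\CPd A$ is closed in $\sQ^A$ under pointwise \emph{joins and meets} --- closure under joins is immediate since $\with$ distributes over joins --- so $\CPd A$ is a complete sublattice of the completely distributive lattice $\sQ^A$, hence itself completely distributive by the equational characterization $\bw_i\bv_j a_{ij}=\bv_f\bw_i a_{i,f(i)}$. Passing to the opposite order then gives continuity of $(\CPd A,\pleq)$ without any retract machinery or appeal to self-duality beyond the trivial observation that this equation is symmetric. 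Your route is perfectly valid, just longer than necessary; noticing closure under joins as well collapses the argument for (i) to a single line.
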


\begin{proof}
This follows immediately from a combination of Example \ref{exmp of comp}\thinspace(2), Proposition \ref{cond impl cont}, the fact that the underlying order of $\CPd A$ is opposite to that inherited from  $\sQ^A$  (pointwise ordered)  and that $\CPd A$ is closed in $\sQ^A$ under   joins and meets.
\end{proof}

Not all completely distributive $\sQ$-categories are continuous even when the underlying lattice of $\sQ$ is the interval $[0,1]$, as we shall see in Section \ref{last section}, so the following conclusion is a bit unexpected.

\begin{cor} Let $(\sQ,\with,k)$ be an integral   quantale such that   the underlying lattice   is completely distributive and that for all $p\in\sQ$,  $p\with-:\sQ\to \sQ$ preserves filtered meets. Then, every completely co-distributive    $\sQ$-category is continuous.
\end{cor}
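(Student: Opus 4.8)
The plan is to reduce the statement to Proposition~\ref{retract of continuous} applied with $\CT=\CC$. Observe first that the hypotheses make $\sQ$ an integral quantale whose underlying lattice, being completely distributive, is continuous; so Convention~\ref{convention} is in force, $\CC$ is a saturated class of weights, and ``continuous'' means ``$\CC$-continuous $\CC$-algebra'' in the sense of Definition~\ref{continuous Qcat}. The corollary just proved says that, under exactly these hypotheses, $\CPd A$ is continuous --- hence a $\CC$-continuous $\CC$-algebra --- for every $\sQ$-category $A$. So it suffices to exhibit an arbitrary completely co-distributive $\sQ$-category $A$ as a retract of $\CPd A$ in the category of $\CC$-algebras and $\CC$-homomorphisms.

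Let $A$ be completely co-distributive, with its string of adjoint $\sQ$-functors $\syd_A\dv\inf_A\dv{\sf t}^\dagger_A\colon A\to\CPd A$. By definition $A$ is separated and complete, hence forward Cauchy cocomplete, i.e.\ a $\CC$-algebra (and so is $\CPd A$); and since $\syd_A$ is fully faithful with right adjoint $\inf_A$ while $A$ is separated, the unit is an identity, so $\inf_A\circ\syd_A=1_A$. Thus $A$ is a retract of $\CPd A$ in $\QCat$, with section $\syd_A$ and retraction $\inf_A$. It remains to check that both of these are $\CC$-homomorphisms. By Corollary~\ref{Scott continuous} together with Convention~\ref{convention}, a $\sQ$-functor between separated complete $\sQ$-categories is a $\CC$-homomorphism exactly when it is Scott continuous on the underlying (complete) lattices, so it is enough that $\syd_A$ and $\inf_A$ preserve directed joins. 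Here the whole string of adjunctions is used: $\syd_A$ is a left adjoint, and $\inf_A$ is a left adjoint of ${\sf t}^\dagger_A$, so by Proposition~\ref{ad func}(1) each of them preserves \emph{all} joins, in particular directed ones. Hence $A$ is a retract of the $\CC$-continuous $\CC$-algebra $\CPd A$ inside the category of $\CC$-algebras, and Proposition~\ref{retract of continuous} gives that $A$ is $\CC$-continuous, i.e.\ continuous.

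I expect the only delicate point to be recognizing the retraction $\inf_A$ as a $\CC$-homomorphism: from $\syd_A\dv\inf_A$ alone one knows only that $\inf_A$ preserves meets, so it is precisely the presence of the further right adjoint ${\sf t}^\dagger_A$ --- that is, complete co-distributivity rather than anything weaker --- that forces $\inf_A$ to be Scott continuous, which is what lets the retract argument take place in $\CC\text{-}{\sf Alg}$ instead of merely in $\QInf$. Everything else is a direct invocation of the cited results, with no new computation. A slightly longer alternative route, bypassing the retract machinery, would verify the two conditions of Proposition~\ref{cond impl cont} for $A$ directly, using the identity $p\multimap_A x=\inf_A\big(p\with A(x,-)\big)$ (so that $p\multimap_A{-}=\inf_A\circ(p\multimap_{\CPd A}{-})\circ\syd_A$) together with the hypothesis that $p\with-$ preserves filtered meets, which is what makes $p\multimap_{\CPd A}{-}$ Scott continuous on $\CPd A$.
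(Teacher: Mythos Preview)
Your proof is correct and follows exactly the paper's approach: the paper's one-line argument ``Since $\inf:\CPd A\to A$ has both a left adjoint and a right adjoint, $A$ is a retract of $\CPd A$, which implies that $A$ is continuous because so is $\CPd A$'' is precisely your retract-in-$\CC\text{-}{\sf Alg}$ argument, and you have simply made explicit the step the paper leaves tacit, namely that having ${\sf t}^\dagger_A$ as a further right adjoint is what makes $\inf_A$ a left adjoint and hence (via Corollary~\ref{Scott continuous}) a $\CC$-homomorphism. Your closing remark about the alternative route through Proposition~\ref{cond impl cont} is also sound but not used in the paper.
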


 \begin{proof} Since $\inf:\CPd A\to A$ has both a left adjoint and a right adjoint, $A$ is a retract of $\CPd A$, which implies that $A$ is continuous because so is $\CPd A$ by the above corollary.
 \end{proof}

\section{Relation to distributive law}
Related to Question \ref{ques} a general one is: \begin{ques}
Let $(\CT,\sm,\se)$ be a saturated class of weights  on $\sQ$-{\sf Cat}.  Is every completely distributive $\sQ$-category   $\CT$-continuous? \end{ques}

The answer depends on whether the copresheaf monad $(\CPd,\ssd,\syd)$ \emph{distributes over} the monad $(\CT,\sm,\se)$. We would like to note that  in this section the quantale $\sQ$ is not assumed to be a continuous one as we have agreed in \ref{convention} when talking about forward Cauchy weights; that is to say, the results   apply to any commutative quantale.

By a \emph{lifting} of $(\CT,\sm,\se)$ through the forgetful functor $U:\sQ$-${\sf Inf}\to\sQ$-{\sf Cat} we mean a monad $(\widetilde{\CT}, \widetilde{\sm},\widetilde{\se})$ on $\sQ$-{\sf Inf} such that \[U\circ \widetilde{\CT} = \CT\circ U,~~ U\circ\widetilde{\sm}=\sm\circ U,~~ U\circ \widetilde{\se}=\se\circ U.\] \[\bfig
\square<600,370>[\QInf`\QInf`\QCat`\QCat;
\wti{\CT}`U`U`\CT]
\efig\]
It is clear that such a lifting of $(\CT,\sm,\se)$ exists if and only if  for each separated and complete $\sQ$-category $A$, both $\se_A:A\to\CT A$ and $\sm_A:\CT\CT A\to\CT A$ are   $\sQ$-{\sf Inf} morphisms. Furthermore, such a lifting, when it exists, is necessarily unique  since the   functor $U$ is injective on objects.

A \emph{distributive law} of the monad $\CPd$ over $\CT$ is a natural transformation $\delta:\CPd\CT\to\CT\CPd$   satisfying certain conditions, see e.g. \cite[II.3.8]{Monoidal top}. Since $\QInf$ is the category of Eilenberg-Moore algebras of the copresheaf monad $\CPd$, it follows from \cite[II.3.8.2]{Monoidal top} that distributive laws of $\CPd$ over $\CT$ correspond bijectively to  liftings of $(\CT,\sm,\se)$ through the forgetful functor $U$. Therefore,  distributive laws of $\CPd$ over $\CT$, when they exist, are unique. So in this case we simply say that $\CPd$ \emph{distributes over} $\CT$. The main result in this section asserts that for a saturated class of weights   $\CT$  on $\QCat$, to require that every completely distributive $\sQ$-category is $\CT$-continuous is to require that $\CPd$ distributes over $\CT$.

\begin{thm}\label{main1}  For a saturated class of weights   $\CT$  on $\QCat$, the following statements are equivalent:
\begin{enumerate}[label=\rm(\arabic*)]\setlength{\itemsep}{0pt}
\item Every completely distributive $\sQ$-category is $\CT$-continuous. \item The copresheaf monad $\CPd$ distributes over $\CT$.\end{enumerate}
\end{thm}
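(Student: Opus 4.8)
The plan is to reduce both statements to a single condition on the free cocompletion functor $\CP$. On one side, arguing exactly as in the case $\CT=\CC$ treated earlier, statement (1) is equivalent to asserting that $\CP A$ is $\CT$-continuous for every $\sQ$-category $A$: the $\sQ$-category $\CP A$ is always completely distributive, since $\CP\sy_A\dv\sfs_A\dv\sy_{\CP A}$, which gives ``$\Rightarrow$'', while for ``$\Leftarrow$'' any completely distributive $A$ is a retract of $\CP A$ in $\QSup\subseteq\CT\text{-}{\sf Alg}$, so Proposition~\ref{retract of continuous} applies. On the other side, by the correspondence recalled above between distributive laws of $\CPd$ over $\CT$ and liftings of $(\CT,\sm,\se)$ along $U:\QInf\to\QCat$, statement (2) holds iff $\se_B:B\to\CT B$ and $\sm_B:\CT\CT B\to\CT B$ are $\QInf$-morphisms for every separated complete $\sQ$-category $B$. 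But a separated complete $B$ is in particular a $\CT$-algebra, so $\se_B$ has the left adjoint ${\sup}_B$, and since $\CT$ is a KZ-doctrine $\sm_B$ has the left adjoint $\CT\se_B$; thus $\se_B$ and $\sm_B$ are automatically right adjoint $\sQ$-functors, and the real content of (2) is that $\CT B$ is separated and complete whenever $B$ is. Since every separated complete $B$ is a retract of $\CP B$ in $\QCat$ (via ${\sup}_B$ and $\sy_B$), so that $\CT B$ is a retract of $\CT(\CP B)$ in $\QCat$, and since retracts in $\QCat$ of separated complete $\sQ$-categories are again such (the dual of Proposition~\ref{retract of T-alg}), statement (2) is equivalent to: $\CT(\CP A)$ is complete for every $\sQ$-category $A$. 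Everything then comes down to showing, for each $A$, that $\CP A$ is $\CT$-continuous if and only if $\CT(\CP A)$ is complete.

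For the implication ``$\CT(\CP A)$ complete $\Rightarrow\CP A$ is $\CT$-continuous'': $\CT(\CP A)$ is a full sub-$\sQ$-category of $\CP(\CP A)$ containing every representable $\se_{\CP A}(\phi)$, so by the Yoneda lemma any meet or cotensor it possesses is computed pointwise, as in $\CP(\CP A)$; hence $\vep_{\CP A}:\CT(\CP A)\to\CP(\CP A)$ preserves meets and cotensors, and by Proposition~\ref{ad func}(2) it has a left adjoint. As ${\sup}^\CT_{\CP A}=\sfs_A\circ\vep_{\CP A}$ and $\sfs_A$ has the left adjoint $\CP\sy_A$, the composite ${\sup}^\CT_{\CP A}$ has a left adjoint; combined with ${\sup}^\CT_{\CP A}\dv\se_{\CP A}$ this yields the string ${\sf t}_{\CP A}\dv{\sup}^\CT_{\CP A}\dv\se_{\CP A}$, so $\CP A$ is $\CT$-continuous.

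For the converse ``$\CP A$ is $\CT$-continuous for all $A$ $\Rightarrow\CT(\CP A)$ is complete'', the task is to produce a left adjoint of $\vep_{\CP A}$, that is, to exhibit $\CT(\CP A)$ as a reflective sub-$\sQ$-category of $\CP(\CP A)$. What is available is a left adjoint ${\sf t}_{\CP A}$ of ${\sup}^\CT_{\CP A}=\sfs_A\circ\vep_{\CP A}$, the right adjoint $\sy_{\CP A}$ of $\sfs_A$, the KZ/co-KZ machinery, and the hypothesis applied uniformly (to $\CP A$, to $\CT A$, and so on); these must be assembled into the required reflector. I expect this last step to be the main obstacle: the reductions above are routine KZ-doctrine bookkeeping, whereas deducing \emph{completeness} of the free $\CT$-cocompletion $\CT(\CP A)$ from the bare $\CT$-continuity of $\CP A$ is precisely where the interplay of $\CP$, $\CPd$ and $\CT$ has to be used --- concretely, for $\CT=\CC$ this is the point at which closure of $\CC A$ under cotensors in $\CP A$ is forced, the very phenomenon that the remainder of the paper shows to depend on the structure of the quantale.
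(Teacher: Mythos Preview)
Your reductions are sound and your argument for $(2)\Rightarrow(1)$ is correct. The gap is exactly where you flag it: you do not construct the reflector, and your final paragraph reveals a conceptual slip. The equivalence $(1)\Leftrightarrow(2)$ holds unconditionally, for every quantale and every saturated $\CT$; what depends on the quantale (in Theorem~\ref{main}) is whether $(1)$---equivalently $(2)$---is \emph{true} for $\CT=\CC$. So this step cannot be a genuine obstruction: it must go through by pure KZ bookkeeping, and it does.

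Here is the missing construction, which is precisely what the paper does. Work with an arbitrary separated complete $B$ (you only need $B=\CP A$, but nothing is gained by that restriction). By hypothesis $\CP B$ is $\CT$-continuous, so ${\sf t}_{\CP B}\dashv{\sup}^\CT_{\CP B}:\CT\CP B\to\CP B$. Since $B$ is cocomplete, ${\sup}_B\dashv\sy_B:B\to\CP B$, and applying the $2$-functor $\CT$ gives $\CT{\sup}_B\dashv\CT\sy_B$. Composing,
\[\CT{\sup}_B\circ{\sf t}_{\CP B}\ \dashv\ {\sup}^\CT_{\CP B}\circ\CT\sy_B:\CT B\to\CP B.\]
Now identify the right leg with $\vep_B$: since $\vep:\CT\to\CP$ is a monad morphism, $\vep_{\CP B}\circ\CT\sy_B=\CP\sy_B\circ\vep_B$, and ${\sup}^\CT_{\CP B}=\sfs_B\circ\vep_{\CP B}$, so
\[{\sup}^\CT_{\CP B}\circ\CT\sy_B=\sfs_B\circ\CP\sy_B\circ\vep_B=\vep_B\]
by the unit law for $\CP$. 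Hence $\vep_B$ has the left adjoint $\CT{\sup}_B\circ{\sf t}_{\CP B}$, which is condition~(5) of Lemma~\ref{comp mond} and therefore gives~$(2)$. In your notation (with $B=\CP A$) the reflector you were looking for is $\CT\sfs_A\circ{\sf t}_{\CP\CP A}$, obtained by applying the hypothesis one level up---to $\CP A$ rather than to $A$---which you list among your available tools but never invoke.
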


A lemma first.

\begin{lem}\label{comp mond} Let $\CT$ be a saturated class of weights on $\QCat$. Then, the following statements are equivalent:
\begin{enumerate}[label=\rm(\arabic*)]\setlength{\itemsep}{0pt} \item The copresheaf monad $\CPd$ distributes over $\CT$.
\item The composite $\CT\CPd$ is a monad on $\QCat$. \item $\CT$ has a lifting through the forgetful functor $U:\sQ$-${\sf Inf}\to\sQ$-{\sf Cat}.

\item For every separated complete $\sQ$-category $A$, $\CT A$ is a complete $\sQ$-category.
\item  For every separated complete $\sQ$-category $A$,
    the inclusion $\CT A\to\CP A$ has a left adjoint.
\end{enumerate}
\end{lem}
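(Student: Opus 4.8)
The plan is to establish the equivalences in two blocks. The block $(1)\Leftrightarrow(2)\Leftrightarrow(3)$ is pure monad theory: distributive laws of $\CPd$ over $\CT$, monad structures on the composite $\CT\CPd$, and liftings of $\CT$ through the forgetful functor $U\colon\QInf\to\QCat$ all carry the same information. The block $(3)\Leftrightarrow(4)\Leftrightarrow(5)$ is where the structure of $\QCat$ enters, through the lax-idempotence of $\CT$ and a single observation about how limits in $\CT A$ are computed.

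For the first block, $\QInf$ is the Eilenberg--Moore category of the copresheaf monad $\CPd$ with $U$ its forgetful functor, so by Beck's theorem distributive laws $\CPd\CT\to\CT\CPd$ biject with liftings of $(\CT,\sm,\se)$ through $U$ --- this is $(1)\Leftrightarrow(3)$ and is already recorded in the text. A distributive law makes $\CT\CPd$ a monad, and conversely lax-idempotence of $\CT$ (a KZ-doctrine) and of $\CPd$ (a co-KZ-doctrine) pins any monad structure on $\CT\CPd$ compatible with the two factors down to the canonical one, which is precisely a distributive law; so $(1)\Leftrightarrow(2)$ as well. One can also reach $(2)\Rightarrow(4)$ by a retract argument: from the canonical unit, every $\CT\CPd A$ is, as a $\CT\CPd$-algebra, a retract of $\CPd$ of a $\sQ$-category, hence separated and complete, and $\CT A$ is then a retract of $\CT\CPd A$ in $\QCat$.

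Two directions of the second block are routine. $(3)\Rightarrow(4)$: if $\widetilde\CT$ is the lifting, then for separated complete $A$ the $\QInf$-object $\widetilde\CT A$ has underlying $\sQ$-category $\CT A$, so $\CT A$ is separated and complete. $(5)\Rightarrow(4)$: the inclusion $\iota_A\colon\CT A\to\CP A$ is fully faithful, so a left adjoint to it exhibits $\CT A$ as a retract of $\CP A$ in $\QCat$; since $\CP A$ is a $\CP$-algebra, Proposition \ref{retract of T-alg} with $\CT=\CP$ shows $\CT A$ is a $\CP$-algebra, that is, separated and complete. The substantial content is that $(4)$ implies $(5)$ and $(3)$, and both follow from the claim that, whenever $\CT A$ is complete, its cotensors and meets are computed exactly as in $\CP A$: for $\phi\in\CT A$, $p\in\sQ$ and $C\subseteq\CT A$, the elements $p\multimap\phi$ and $\bw C$ formed in $\CT A$ coincide, as weights of $A$, with those formed pointwise in $\CP A$. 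This is a Yoneda argument that uses saturation: every representable $\sy_A(a)$ lies in $\CT A$, so evaluating the defining equalities of the cotensor and the meet in $\CT A$ at $\rho=\sy_A(a)$, and using $\CT A(\rho,-)=\CP A(\rho,-)$ together with the Yoneda lemma, determines the value of each at every $a\in A$.

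Granting the claim, $(4)\Rightarrow(5)$ is immediate: $\iota_A$ then preserves cotensors and meets, hence is a right adjoint by Proposition \ref{ad func}(2), so it has a left adjoint. For $(4)\Rightarrow(3)$ we use the criterion recalled in the text that a lifting of $\CT$ through $U$ exists precisely when $\se_A\colon A\to\CT A$ and $\sm_A\colon\CT\CT A\to\CT A$ are $\QInf$-morphisms for every separated complete $A$. Here $\se_A$ is the corestriction of the Yoneda embedding $\sy_A$, which preserves cotensors and meets, and by the claim these limits in $\CT A$ agree with those in $\CP A$, so $\se_A$ preserves them and is a right adjoint; meanwhile $\sm_A$ is automatically a right adjoint for any KZ-doctrine, being the right adjoint in the string $\CT\se_A\dashv\sm_A\dashv\se_{\CT A}$, and $\CT\CT A$ is complete by applying $(4)$ to the separated complete $\sQ$-category $\CT A$. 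I expect the main obstacle to be the limit-identification claim: one must be careful that the cotensor and meet of $\CT A$ are characterised by testing only against objects of $\CT A$, so that their coincidence with the ambient $\CP A$-constructions genuinely requires the representables to lie in $\CT A$ (hence saturation), not merely completeness of $\CT A$. A secondary point is the bookkeeping in $(1)\Leftrightarrow(2)$ needed to identify the monad structure on $\CT\CPd$ with the one arising from a distributive law.
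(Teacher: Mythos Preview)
Your proposal is correct and follows essentially the same route as the paper: $(1)\Leftrightarrow(2)\Leftrightarrow(3)$ is dispatched by the general monad-theoretic correspondence (the paper cites \cite[II.3.8.2]{Monoidal top}), and the heart of the second block is your claim that cotensors and meets in $\CT A$ are computed pointwise, which is exactly the paper's argument for $(4)\Rightarrow(5)$ via Proposition~\ref{comp by cote}. One small point worth noting: the paper closes the cycle with $(5)\Rightarrow(3)$ rather than your $(4)\Rightarrow(3)$, and in that step it explicitly checks that $\CT$ lifts to a \emph{functor} on $\QInf$ --- for a right adjoint $f$ with left adjoint $g$, $\CP g\dashv\CP f$, so $\CT f$ is a right adjoint since $\CT$ is a subfunctor of $\CP$. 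Your $(4)\Rightarrow(3)$ invokes the textual criterion without addressing this functoriality step; it is easily supplied (once you have $(4)\Rightarrow(5)$ the paper's argument applies verbatim), but you should be aware it is there.
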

\begin{proof} The equivalence $(1)\Leftrightarrow (2)\Leftrightarrow(3)$ follows immediately from \cite[II.3.8.2]{Monoidal top}  and the fact that $\QInf$ is the category of Eilenberg-Moore algebras of the monad $\CPd$.

$(3)\Rightarrow(4)$ Obvious.

$(4)\Rightarrow(5)$ By Proposition \ref{ad func}, it suffices to check that $\CT A$ is closed in $\CP A$ with respect to   cotensors and   meets.
For   $p\in\sQ$ and $\phi\in\CT A$, let $p\multimap\phi$ be the cotensor of $p$ and $\phi$ in $\CT A$. Then, for all $x\in A$,  \[(p\multimap\phi)(x)=\CT A(\se_A (x),p\multimap\phi)=p\ra\CT A(\se_A (x),\phi)=p\ra\phi(x)\] by Proposition \ref{comp by cote}\thinspace(1), hence $\CT A$ is closed in $\CP A$ with respect to  cotensors.
If $\phi$ is the meet of a family $\phi_i$ in $\CT A$, then for all $x\in A$, \[\phi(x)=\CT A(\se_A (x),\phi)=\bw_i\CT A(\se_A (x),\phi_i)=\bw_i\phi_i(x)\] by Proposition \ref{comp by cote}\thinspace(2), hence $\CT A$ is closed in $\CP A$ with respect to meets.

$(5)\Rightarrow(3)$ For each object $A$ in $\QInf$,   since $\CP A$ is   complete   and  $\CT A$ is a retract of $\CP A$ in $\QCat$, it follows from  Proposition \ref{retract of T-alg}  that $\CT A$ is   complete. For each morphism $f:A\to B$ in $\QInf$, let $g:B\to A$ be the   left adjoint of $f$. Then, $\CP f:\CP A\to\CP B$ is right adjoint to $\CP g:\CP B\to\CP A$, so $\CT f:\CT A\to\CT B$ is a right adjoint because $\CT$ is a subfunctor of $\CP$.  Therefore, the assignment \[\wti{\CT} A:=\CT A\] gives rise to an endofunctor on $\QInf$. To see that $\wti{\CT}$ is  a  lifting of $\CT$ through the forgetful functor $U$, it remains to check that  for each separated complete $\sQ$-category $A$, both $\se_A:A\to\CT A$ and $\sm_A:\CT\CT A\to A$ are right adjoints. First, since $A$ is  separated and  cocomplete,   it is a $\CT$-algebra, so $\se_A$ is right adjoint to $\sup_A:\CT A\to A$. Second, since $\CT$ is a KZ-doctrine, it follows that $\sm_A$ is both a left   and a right adjoint.
\end{proof}

\begin{proof}[of Theorem \ref{main1}.] $(1)\Rightarrow(2)$ By Lemma \ref{comp mond}, it suffices to show that for each complete $\sQ$-category $A$, the inclusion functor $\CT A\to\CP A$ has a left adjoint. Since $\CP A$ is completely distributive,   then, by assumption,  the left adjoint $\sup_{\CP A}:\CT\CP A\to \CP A$ of $\se_{\CP A}:\CP A\to\CT\CP A$  has   a left adjoint, say,  ${\sf t}_{\CP A}:\CP A\to \CT\CP A$. Since $A$ is cocomplete, the Yoneda embedding $\sy_A:A\to\CP A$ has a left adjoint $\sfs_A:\CP A\to A$. Since any 2-functor preserves adjunctions,   it follows that $\CT \sfs_A:\CT\CP A\to\CT A$ is left adjoint to $\CT \sy_A :\CT A\to\CT\CP A$. Thus, $\CT \sfs_A\circ {\sf t}_{\CP A}:\CP A\to  \CT A$ is left adjoint to $\sup_{\CP A}\circ \CT \sy_A:\CT A\to \CP A$. Since $\CT$ is a submonad of $\CP$, then  \[{\sup}_{\CP A}\circ \CT \sy_A (\phi) =\sfs_{\CP A}\circ \CP \sy_A (\phi) =\phi  \] for all $\phi\in\CT A$.  Therefore, the inclusion functor  $\CT A\to\CP A$, which coincides with  ${\sup}_{\CP A}\circ \CT \sy_A $, has a left adjoint, given by $\CT \sfs_A \circ {\sf t}_{\CP A}$.

$(2)\Rightarrow(1)$ Let $A$ be a completely distributive $\sQ$-category. Since $\CPd$ distributes over $\CT$,  it follows from  Lemma \ref{comp mond} that the inclusion $\CT A\to\CP A$ has a left adjoint. Then, the composite of the left adjoint of $\sfs_A:\CP A\to A$ with the left adjoint of the inclusion $\CT A\to\CP A$ is   a left adjoint of $\sup_A:\CT A\to A$, so $A$ is $\CT$-continuous.  \end{proof}

\begin{rem} Putting $\CT=\CP$ in Theorem \ref{main1} one obtains that $\CPd$ distributes over $\CP$, as has already been pointed out in  \cite{Lai2017,Stubbe2017}. \end{rem}

\begin{prop} If $\CT$ is a saturated class of weights over which $\CPd$ distributes, then for each separated $\sQ$-category $A$, the following statements are equivalent:
\begin{enumerate}[label=\rm(\arabic*)]\setlength{\itemsep}{0pt}
\item $A$ is a $\wti{\CT}$-algebra, where $\widetilde{\CT}$ is the lifting of $\CT$  through the forgetful functor $ \sQ$-${\sf Inf}\to\sQ$-{\sf Cat}.
\item $A$ is a $\CT\CPd$-algebra.
\item $A$ is a   complete and $\CT$-continuous  $\sQ$-category.
\end{enumerate}
 \end{prop}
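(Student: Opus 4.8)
The plan is to establish $(1)\Leftrightarrow(2)$ from the general theory of distributive laws, and $(1)\Leftrightarrow(3)$ by unwinding what a $\wti{\CT}$-algebra structure on an object of $\QInf$ amounts to. Throughout I would use the forgetful functor $U:\QInf\to\QCat$, which is faithful and injective on objects, the fact (recalled in the excerpt) that the morphisms of $\QInf$ are precisely the right adjoint $\sQ$-functors between separated complete $\sQ$-categories, and the defining relations $U\wti{\CT}=\CT U$, $U\wti{\sm}=\sm U$, $U\wti{\se}=\se U$ of the lifting $\wti{\CT}$ coming from the distributive law $\delta:\CPd\CT\to\CT\CPd$ (which exists by hypothesis, cf. Lemma \ref{comp mond}).

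For $(1)\Leftrightarrow(2)$ I would invoke Beck's theorem on distributive laws: since $\CPd$ distributes over $\CT$, the composite $\CT\CPd$ is a monad and its Eilenberg--Moore category is isomorphic, compatibly with the forgetful functors down to $\QCat$, to the category of $\wti{\CT}$-algebras over $\QInf$, the Eilenberg--Moore category of $\CPd$ (this is the content of \cite[II.3.8]{Monoidal top}, already used above for the correspondence between distributive laws and liftings). Since that isomorphism is the identity on underlying $\sQ$-categories, a $\sQ$-category $A$ admits a $\CT\CPd$-algebra structure if and only if it admits a $\wti{\CT}$-algebra structure; this is $(1)\Leftrightarrow(2)$.

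For $(1)\Leftrightarrow(3)$ I would argue as follows. Applying the faithful $U$, the $\wti{\CT}$-algebra identities for a pair $(A,h)$ --- with $A$ an object of $\QInf$ and $h:\wti{\CT}A\to A$ a morphism of $\QInf$ --- go over precisely to the $\CT$-algebra identities for $(A,Uh)$ in $\QCat$, and conversely by faithfulness; so $(A,h)$ is a $\wti{\CT}$-algebra iff $(A,Uh)$ is a $\CT$-algebra. Because $\CT$ is a KZ-doctrine, a $\CT$-algebra structure on $A$ is unique and is $\sup_A$, the left inverse --- hence left adjoint --- of $\se_A$. Consequently $A$ carries a $\wti{\CT}$-algebra structure exactly when $A$ is separated and complete, every $\phi\in\CT A$ has a supremum, and $\sup_A:\CT A\to A$ is a morphism of $\QInf$, i.e. a right adjoint $\sQ$-functor; and this last clause is the assertion that $\sup_A$ admits a left adjoint ${\sf t}_A$, so that ${\sf t}_A\dv\sup_A\dv\se_A$ (the second adjunction holding because $A$ is a $\CT$-algebra). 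Together with completeness this is precisely (3), so $(1)\Leftrightarrow(3)$, and with it the proposition.

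I expect the main obstacle to be conceptual rather than technical: one must keep straight the passage between $\QCat$ and $\QInf$, in particular that ``$\sup_A$ is a morphism of $\QInf$'' is the same as ``$\sup_A$ has a left adjoint'', and one must apply the version of Beck's theorem appropriate to the given variance of the distributive law (composite monad $\CT\CPd$ and lifting $\wti{\CT}$ to $\CPd\text{-}{\sf Alg}=\QInf$, not the other way round). Granting those, both chains of equivalences are purely formal and involve no computation.
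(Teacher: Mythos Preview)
Your proposal is correct and follows essentially the same route as the paper: $(1)\Leftrightarrow(2)$ by the standard correspondence between distributive laws, liftings, and composite monads (the paper cites \cite[II.3.8.4]{Monoidal top}), and $(1)\Leftrightarrow(3)$ by observing that a $\wti{\CT}$-algebra structure on an object $A$ of $\QInf$ is precisely a $\CT$-algebra structure $\sup_A$ that happens to be a $\QInf$-morphism, i.e.\ a right adjoint. The only cosmetic difference is that in the direction $(3)\Rightarrow(1)$ the paper explicitly invokes Lemma~\ref{comp mond} to record that $\CT A$ is complete (so that $\sup_A$ has $\QInf$ as its ambient category), whereas you have this implicitly via $U\wti{\CT}A=\CT A$; you might want to make that point explicit for clarity.
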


\begin{proof} The equivalence $(1)\Leftrightarrow(2)$ is a special case of a general result in category theory, see e.g. \cite[II.3.8.4]{Monoidal top}.  It remains to check   $(1)\Leftrightarrow(3)$. If $A$ is a $\wti{\CT}$-algebra, then  $\se_A:A\to\CT A$ has a left adjoint $\sup_A:\CT A\to A$ which is also a morphism in $\QInf$. This means that $\sup_A$ has a left adjoint, so $A$ is $\CT$-continuous. Conversely, let $A$ be a complete and $\CT$-continuous   $\sQ$-category. Then,  $\CT A$ is complete by Lemma \ref{comp mond}, so the string of adjoint $\sQ$-functors
\[{\sf t}_A\dv{\sup}_A\dv\se_A:A\to\CT A,\]
ensures that $\sup_A$ is a morphism in $\QInf$ and  consequently, $A$ is   a $\wti{\CT}$-algebra.
\end{proof}

\begin{cor}For each saturated class of weights $\CT$ on $\QCat$, let \(\CT\text{-}{\sf Cont}\) denote the category that has as objects  complete and $\CT$-continuous $\sQ$-categories  and has as morphisms those $\sQ$-functor that are  right adjoints and $\CT$-homomorphisms. If $\CPd$ distributes over $\CT$, then \(\CT\text{-}{\sf Cont}\) is  monadic over $\QCat$. \end{cor}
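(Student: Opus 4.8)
The plan is to identify $\CT\text{-}{\sf Cont}$ with the Eilenberg--Moore category of a monad on $\QCat$ and then invoke the tautology that such categories are monadic over their base. Since $\CPd$ distributes over $\CT$, Lemma \ref{comp mond} supplies a monad structure on the composite $\CT\CPd$ on $\QCat$; write $\QCat^{\CT\CPd}$ for its category of Eilenberg--Moore algebras, equipped with the usual forgetful functor to $\QCat$, which is monadic by construction. It therefore suffices to produce an isomorphism of categories $\CT\text{-}{\sf Cont}\cong\QCat^{\CT\CPd}$ commuting with the two forgetful functors to $\QCat$.

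On objects this isomorphism is precisely the equivalence $(2)\Leftrightarrow(3)$ of the preceding proposition: a $\CT\CPd$-algebra is the same datum as a complete and $\CT$-continuous $\sQ$-category, with the same underlying $\sQ$-category on both sides. So the real content lies in matching the morphisms, and for this I would appeal to the standard Beck correspondence for distributive laws \cite[II.3.8.2, II.3.8.4]{Monoidal top}: the distributive law of $\CPd$ over $\CT$ makes $\QCat^{\CT\CPd}$ isomorphic, over $\QCat$, to the category $\widetilde{\CT}\text{-}{\sf Alg}$ of Eilenberg--Moore algebras of the lifting $\widetilde{\CT}$ of $\CT$ to $\QInf$. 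Under this identification a homomorphism of $\CT\CPd$-algebras $f\colon A\to B$ is exactly a morphism of $\QInf$ — that is, a right adjoint $\sQ$-functor — which is moreover a morphism of $\widetilde{\CT}$-algebras, i.e. satisfies $f\circ{\sup}_A={\sup}_B\circ\CT f$ and so is a $\CT$-homomorphism. These are exactly the morphisms declared for $\CT\text{-}{\sf Cont}$, whence the isomorphism of categories over $\QCat$, and monadicity of $\CT\text{-}{\sf Cont}$ over $\QCat$ is immediate.

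The step I expect to require the most care is precisely this morphism identification — confirming that the two conditions ``right adjoint'' and ``$\CT$-homomorphism'' together capture exactly the notion of a $\CT\CPd$-algebra homomorphism, with no hidden compatibility lost. Concretely one must rewrite the Eilenberg--Moore structure map of a $\CT\CPd$-algebra $A$ in terms of $\inf_A$, $\sup_A$ and the distributive law $\delta\colon\CPd\CT\to\CT\CPd$ (this uses completeness of $A$ and Lemma \ref{comp mond}), and check that a $\sQ$-functor $f$ commutes with the resulting composites for $A$ and $B$ if and only if it commutes with $\inf_A,\inf_B$ and with $\sup_A,\sup_B$ separately — the distributive law being exactly what legitimates this splitting. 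A more self-contained alternative, if one prefers to avoid citing the Beck correspondence, is to verify Beck's monadicity theorem directly for the forgetful functor $U\colon\CT\text{-}{\sf Cont}\to\QCat$: its left adjoint sends $A$ to $\CT\CPd A$ with its canonical algebra structure, which is complete and $\CT$-continuous by Proposition \ref{TA is T-continuous} applied to $\CPd A$ together with Lemma \ref{comp mond}, and one then checks that $U$ creates coequalizers of reflexive $U$-split pairs. Routing through the composite monad $\CT\CPd$ is, however, the shorter path.
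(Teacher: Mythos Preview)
Your proposal is correct and follows the route the paper intends: the corollary is stated without proof precisely because it is meant to be read off the preceding proposition together with the standard Beck correspondence \cite[II.3.8.4]{Monoidal top}, identifying $\CT\text{-}{\sf Cont}$ with the Eilenberg--Moore category of $\CT\CPd$ (equivalently, of the lifting $\widetilde{\CT}$ on $\QInf$). Your discussion of the morphism identification is exactly the point the paper leaves implicit, and your verification that ``right adjoint $+$ $\CT$-homomorphism'' coincides with ``$\widetilde{\CT}$-algebra morphism in $\QInf$'' is the correct way to close it.
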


\section{The main result} \label{last section}

A continuous t-norm  \cite{Klement2000} is a continuous map $\with:[0,1]^2\to[0,1]$  that makes $([0,1],\with ,1)$ into a commutative quantale. Given a continuous t-norm $\with $, the quantale $\sQ=([0,1],\&,1)$  is clearly integral and continuous. We record here a simple fact  for later use: for any $x$ and $y$ in $[0,1]$,  $x$ is way below $y$ (i.e., $x\ll y$) if either $x=0$ or $x<y$.

\begin{exmp}Some basic   continuous t-norms and their implications:
\begin{enumerate}[label={\rm(\arabic*)}] 
\item  The G\"{o}del t-norm: \[ x\with y= \min\{x,y\}, \quad x\ra y=\begin{cases}
		1,&x\leq y,\\
		y,&x>y.
		\end{cases} \] The implication   $\ra$ of the G\"{o}del t-norm is  continuous  except at   $(x,x)$, $x<1$.

\item  The product t-norm:  \[ x\with_P y=xy, \quad x\ra y=\begin{cases}
		1,&x\leq y,\\
		y/x,&x>y.
		\end{cases} \] The implication   $\ra$
of the product t-norm is  continuous  except at   $(0,0)$. The quantale $([0,1], \with_P, 1)$ is  isomorphic to Lawvere's quantale $([0,\infty]^{\rm op},+,0)$ \cite{Lawvere1973}.

\item  The {\L}ukasiewicz t-norm:\[ x\with_{\L} y=\max\{0,x+y-1\}, \quad x\ra y=\min\{1-x+y,1\}. \] The implication   $\ra$
    of the {\L}ukasiewicz t-norm
    is   continuous on $[0,1]^2$.
	\end{enumerate}
\end{exmp}

Let $\with $ be a continuous t-norm. An element $p\in [0,1]$ is  said to be idempotent  if $p\with p=p$.

\begin{prop}{\rm(\cite[Proposition 2.3]{Klement2000})} \label{idempotent}
Let $\&$ be a continuous t-norm on $[0,1]$ and $p$ be an idempotent element of $\&$. Then $x\with y= \min\{x, y\} $ whenever $x\leq p\leq y$.
 \end{prop}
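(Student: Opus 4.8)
The plan is to split the claim into an elementary upper bound and a lower bound, and to obtain the lower bound from the intermediate value theorem together with associativity, which is where idempotency of $p$ gets used.

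First I would isolate two facts about any continuous t-norm $\with$: it is monotone in each variable (immediate from the quantale law that $\with$ distributes over joins, so $a\leq b$ implies $a\with c\leq b\with c$), and $0$ is an annihilator, i.e. $0\with c\leq 0\with 1=0$ for all $c$. With monotonicity the upper bound is automatic and needs no hypothesis on $p$: $x\with y\leq x\with 1=x$ and $x\with y\leq 1\with y=y$, hence $x\with y\leq\min\{x,y\}$. Since $x\leq p\leq y$ forces $x\leq y$, the minimum equals $x$, so everything reduces to proving $x\with y\geq x$ under the hypothesis $x\leq p\leq y$.

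By monotonicity $x\with y\geq x\with p$, so it suffices to show $x\with p=x$ for every $x\leq p$. Here I would use continuity: the map $g\colon[0,p]\to[0,1]$ defined by $g(t)=t\with p$ is continuous (being a restriction of the continuous function $\with$), with $g(0)=0\with p=0$ and $g(p)=p\with p=p$ by idempotency. By the intermediate value theorem there is a $t\in[0,p]$ with $g(t)=x$, that is $t\with p=x$; then associativity and idempotency give
$$x\with p=(t\with p)\with p=t\with(p\with p)=t\with p=x,$$
as required, and combining with the upper bound yields $x\with y=x=\min\{x,y\}$.

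I do not anticipate a real obstacle. The one non-algebraic ingredient is the appeal to the intermediate value theorem, but this is exactly what the definition of a continuous t-norm licenses; alternatively one could replace it by a supremum argument (take $t_0=\sup\{t\in[0,p]\mid t\with p\leq x\}$ and invoke continuity of $\with$ at $(t_0,p)$), but the IVT route is shorter. The conceptual heart of the proof -- and the reason idempotency is essential rather than incidental -- is the collapse $(t\with p)\with p=t\with p$, which converts the statement ``$x$ lies in the image of the map $-\with p$'' into ``$x$ is a fixed point of $-\with p$''.
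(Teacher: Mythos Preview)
Your proof is correct. The paper does not actually supply a proof of this proposition: it is simply quoted from \cite{Klement2000}, so there is nothing to compare against at the level of argument. Your approach---bounding $x\with y\leq\min\{x,y\}$ by monotonicity and then using the intermediate value theorem together with associativity and $p\with p=p$ to show that $-\with p$ fixes every $x\leq p$---is a standard and clean way to establish the result, and the ``image-to-fixed-point'' collapse you highlight is indeed the conceptual core.
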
 It follows immediately  that $y\ra x=x$ whenever  $x< p\leq y$ for some idempotent   $p$. Another consequence of  Proposition \ref{idempotent} is that for any idempotent elements $p, q$    with $p<q$,  the restriction of $\with $ to $[p,q]$, which is also denoted by $\with$,  makes $[p,q]$ into a commutative quantale with $q$ being the unit element.    The following  theorem   is of fundamental importance in the theory of continuous t-norms.

\begin{thm} {\rm(\cite{Klement2000,Mostert1957})}
\label{ordinal sum} Let $\with $ be a continuous t-norm. If $a\in [0,1]$ is non-idempotent, then there exist idempotent elements $a^{-}, a^{+}\in [0,1]$ such that $a^-<a<a^+$ and that the quantale  $([a^{-},a^{+}],\with ,a^{+})$ is  isomorphic to either $([0,1],\with_{\L},1)$ or $([0,1],\with_{P},1)$. Conversely, for each   set of disjoint open intervals $\{(a_n,b_n)\}_n$ of $[0,1]$, the binary operator \[x\with y:=\begin{cases}a_n+(b_n-a_n)T_n\Big(\displaystyle{\frac{x-a_n}{b_n-a_n}, \frac{y-a_n}{b_n-a_n}}\Big), & (x,y)\in [a_n,b_n]^2,\\
\min\{x,y\}, & {\rm otherwise} \end{cases}\] is a continuous t-norm, where each $T_n$ is a continuous t-norm on $[0,1]$. \end{thm}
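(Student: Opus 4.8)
The plan is to treat the two assertions separately, leaning on Proposition~\ref{idempotent} for the decomposition and on the classical structure theory of continuous t-norms.

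For the first assertion I would start from the set $E=\{e\in[0,1]\mid e\with e=e\}$ of idempotents, which is closed because $\with$ is continuous and contains $0$ and $1$. Given a non-idempotent $a$, put $a^{-}=\max(E\cap[0,a])$ and $a^{+}=\min(E\cap[a,1])$; closedness of $E$ makes these well-defined idempotents, and $a\notin E$ forces $a^{-}<a<a^{+}$ with $(a^{-},a^{+})\cap E=\emptyset$. Proposition~\ref{idempotent} then shows that $[a^{-},a^{+}]$ is closed under $\with$ and that $([a^{-},a^{+}],\with,a^{+})$ is a commutative quantale with bottom $a^{-}$: for $x,y\in[a^{-},a^{+}]$ one has $a^{-}\with y=\min\{a^{-},y\}=a^{-}$ and $x\with a^{+}=\min\{x,a^{+}\}=x$, while $a^{-}\le x\with y\le\min\{x,y\}\le a^{+}$. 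The affine bijection $t\mapsto(t-a^{-})/(a^{+}-a^{-})$ is an order isomorphism $[a^{-},a^{+}]\to[0,1]$ carrying $\with$ to a continuous t-norm $T$ on $[0,1]$ whose only idempotents are $0$ and $1$. The heart of the matter is then the Mostert--Shields classification (see \cite{Klement2000,Mostert1957}): every such \emph{Archimedean} continuous t-norm is isomorphic to $\with_{P}$ or to $\with_{\L}$. I would prove this via the usual dichotomy. If some $x\in(0,1)$ is nilpotent, i.e.\ $x\with\cdots\with x=0$ after finitely many factors, one constructs a continuous strictly decreasing $f\colon[0,1]\to[0,1]$ with $f(1)=0$, $f(0)=1$ and $T(u,v)=f^{-1}(\min\{f(u)+f(v),1\})$, an isomorphism to $\with_{\L}$; otherwise $T$ is strict and one builds $f\colon[0,1]\to[0,\infty]$ with $f(1)=0$, $f(0)=\infty$ and $T(u,v)=f^{-1}(f(u)+f(v))$, an isomorphism to $\with_{P}$. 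In either case the additive generator $f$ is obtained by normalising its value at some interior point, propagating along the semigroup law to the set of $\with$-roots and $\with$-powers of that point (dense, by Archimedeanness), and extending by monotone continuity.

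For the converse I would verify the t-norm axioms for the ordinal sum $\with$ directly. Commutativity is inherited from each $T_{n}$ and from $\min$; the identity $1\with x=x$ holds because if $x\in[a_{n},b_{n}]$ with $b_{n}=1$ then the first clause gives $a_{n}+(1-a_{n})T_{n}(1,(x-a_{n})/(1-a_{n}))=x$, and otherwise $1\with x=\min\{1,x\}=x$; monotonicity follows block by block, using that arguments from distinct blocks, or from outside all blocks, are always combined by $\min$. The only clauses requiring genuine care are associativity and joint continuity at the block boundaries, and both come down to the edge identities $T_{n}(1,t)=t$ and $T_{n}(0,t)=0$, which say precisely that the rescaled copy of $T_{n}$ agrees with $\min$ along the sides of $[a_{n},b_{n}]^{2}$; this makes the piecewise definition consistent where the clauses overlap and lets $\min$ absorb any factor that does not lie in the common block.

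The main obstacle is the Mostert--Shields classification invoked in the first half: producing the additive generator rigorously requires either the theory of compact connected topological semigroups with identity and zero, or a careful functional-equation argument in the style of Acz\'el, and essentially all the real work of the theorem is concentrated there. Everything else, including the entire converse, is elementary bookkeeping.
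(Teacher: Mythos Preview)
The paper does not prove this theorem at all: it is stated with the citations \cite{Klement2000,Mostert1957} and immediately used, with no proof or sketch given. So there is nothing to compare your proposal against on the paper's side.

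That said, your outline is the standard route taken in the cited sources, in particular in \cite{Klement2000}: isolate the closed set of idempotents, take the maximal non-idempotent interval $[a^-,a^+]$ around $a$, observe via Proposition~\ref{idempotent} that it carries an Archimedean continuous t-norm after rescaling, and then invoke the Mostert--Shields/Acz\'el dichotomy between strict and nilpotent Archimedean t-norms to obtain the isomorphism with $\with_P$ or $\with_{\L}$; the converse is a direct verification. Your assessment of where the real work lies---the construction of the additive generator---is accurate, and you are right that this is precisely what the references supply rather than something the present paper attempts.
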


Let $\with$ be  a continuous t-norm and $\sQ=([0,1],\&,1)$. Then from Proposition \ref{C is saturated} we obtain that the class  $\CC$ of  forward Cauchy weights is   saturated.
Now we   present the main result in this paper.

\begin{thm}\label{main} Let $\with$ be  a continuous t-norm and $\sQ=([0,1],\&,1)$. Then the following statements are equivalent:
\begin{enumerate}[label=\rm(\arabic*)]\setlength{\itemsep}{0pt}
\item Every completely distributive   $\sQ$-category is continuous.
\item The  $\sQ$-category $([0,1],d_L)$ is continuous.
\item For each non-idempotent element $a\in[0,1]$, the quantale $([a^-,a^+],\with ,a^+)$ is isomorphic to $([0,1],\with_P,1)$   whenever $a^->0$.
\item The implication   $\ra:[0,1]^2\to[0,1]$ is  continuous  at every point off the diagonal $\{(x,x)\mid x\in[0,1]\}$.
\item For each $p\in(0,1]$, the map $p\ra -:[0,1]\to[0,1]$ is Scott continuous  on $[0,p)$.
\item For every  complete $\sQ$-category $A$, the inclusion $\CC A\to\CP A$ has a left adjoint.
\item   $\CPd$ distributes over $\CC$.
\end{enumerate}
\end{thm}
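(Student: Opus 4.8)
The plan is to prove Theorem \ref{main} by establishing a cycle of implications among the seven statements, exploiting the structural results about continuous t-norms (Theorems \ref{ordinal sum} and Proposition \ref{idempotent}) together with the abstract machinery already developed. I would organize the argument as $(1)\Rightarrow(2)$, $(2)\Leftrightarrow(5)$, $(2)\Leftrightarrow(4)$, $(4)\Leftrightarrow(3)$, $(5)\Rightarrow$ (the cotensor hypothesis of Proposition \ref{cond impl cont}) $\Rightarrow$ ``$\CP A$ is continuous for every $A$'' $\Rightarrow(1)$, and finally tie in $(6)$ and $(7)$ via Theorem \ref{main1} and Lemma \ref{comp mond}. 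The implications $(1)\Rightarrow(2)$ and the equivalence of $(1)$, $(6)$, $(7)$ are essentially formal: $(6)\Leftrightarrow(7)$ is Lemma \ref{comp mond}$(1)\Leftrightarrow(5)$, and $(1)\Leftrightarrow(7)$ is Theorem \ref{main1} with $\CT=\CC$; also $(1)\Rightarrow(2)$ holds because $([0,1],d_L)=\CP\star$ is completely distributive. So the real content is the chain connecting $(2)$, $(3)$, $(4)$, $(5)$ back up to the statement that $\CP A$ (equivalently every completely distributive $\sQ$-category) is continuous.

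For the analytic heart, I would first show $(2)\Leftrightarrow(5)$. By Proposition \ref{t in Q}, continuity of $([0,1],d_L)$ (whose underlying lattice is the continuous lattice $[0,1]$) is equivalent to the identity $d_L(x,\sup_A\phi)=\bigwedge_{y\ll x}\phi(y)$ for forward Cauchy weights $\phi$ of $([0,1],d_L)$. Unwinding $d_L$, tensors, and cotensors in $([0,1],d_L)$ (where $p\multimap x = p\to x$), one sees that this condition says precisely that $p\to -$ commutes with the relevant directed joins below its ``breakpoint'', which after using $x\ll y \iff (x=0 \text{ or } x<y)$ in $[0,1]$ reduces to: for each $p\in(0,1]$, $p\to-$ is Scott continuous on $[0,p)$. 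Alternatively, and perhaps more cleanly, I would route through Proposition \ref{cond impl cont} and its converse direction in Proposition \ref{t in Q}: $([0,1],d_L)$ is continuous iff the cotensors $p\multimap- = p\to-$ are Scott continuous, and since $p\to q = 1$ for $q\ge p$, Scott continuity of $p\to-$ on all of $[0,1]$ is equivalent to Scott continuity on $[0,p)$, giving $(5)$. Then $(5)\Leftrightarrow(4)$ is elementary real analysis: $\to$ is always continuous on the region $x<y$ (where it is constantly $1$) and is upper semicontinuous everywhere; off-diagonal continuity amounts to continuity on $\{x>y\}$, and failure of continuity there at a point $(p,q)$ with $q<p$ is exactly failure of (one-sided / directed) continuity of $p\to-$ at $q\in[0,p)$, using monotonicity of $p\to-$.

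The step $(3)\Leftrightarrow(4)$ is where Theorem \ref{ordinal sum} does the work. Using the ordinal-sum decomposition, $\to$ restricted to each summand interval $[a^-,a^+]$ is (up to isomorphism) the implication of either the product or the {\L}ukasiewicz t-norm, and on the ``min'' part it is the G\"odel implication. The {\L}ukasiewicz implication $\min\{1-x+y,1\}$ is continuous everywhere; the product implication $y/x$ (for $x>y$) is continuous except at $(0,0)$; the G\"odel implication is continuous except on the diagonal. The only way an off-diagonal discontinuity can arise is from a product summand whose left endpoint $a^-$ is strictly positive — because then the point $(a^-, y)$ for $y$ slightly below $a^-$ sits at the ``bottom corner'' of that summand, where $x\to y$ jumps (approaching along $x\downarrow a^-$ inside the summand versus from outside). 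When $a^-=0$, the product summand $[0,a^+]$ contributes a discontinuity only at $(0,0)$, which lies on the diagonal and hence is permitted. So $(4)$ fails iff some product summand has $a^->0$, i.e. iff $(3)$ fails; conversely $(3)$ forces every product summand to start at $0$, and a careful check of the finitely-relevant boundary cases shows $\to$ is then off-diagonally continuous. I expect this to be the main obstacle: one must carefully track the behavior of $x\to y$ at the endpoints $a^-$ and $a^+$ of each summand and at transitions between summands, using Proposition \ref{idempotent} to control $\to$ at idempotents, and verify there are no hidden discontinuities coming from accumulation of infinitely many summands.

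Finally, to close the loop I would show $(5)\Rightarrow$ ``$\CP A$ is continuous for all $\sQ$-categories $A$'' (which by the Proposition following Question \ref{ques} is equivalent to $(1)$). Since $\CP A$ is complete with completely distributive underlying lattice $\sQ^A$ (a power of $[0,1]$, hence continuous) and, by Example \ref{exmp of comp}(1), the cotensor in $\CP A$ is $(p\multimap\phi)(x)=p\to\phi(x)$, computed pointwise, Scott continuity of $p\to-$ on $[0,1]$ (which $(5)$ delivers, as noted above) immediately gives Scott continuity of $p\multimap-$ on $\CP A$, so Proposition \ref{cond impl cont} applies and $\CP A$ is continuous. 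Conversely continuity of $\CP A$ for all $A$ specializes at $A=\star$ to $(2)$, so everything is equivalent. This last paragraph is purely formal; the genuine mathematics is confined to the t-norm analysis in $(3)\Leftrightarrow(4)$ and the verification that continuity of $([0,1],d_L)$ really does hinge only on Scott continuity of the maps $p\to-$.
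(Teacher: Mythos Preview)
Your $(3)\Leftrightarrow(4)$ analysis has the roles of the product and \L ukasiewicz summands reversed. Condition $(3)$ does not forbid product summands with $a^->0$; it \emph{requires} that every summand with $a^->0$ be of product type, i.e.\ it forbids \L ukasiewicz summands with positive left endpoint. And it is indeed a \L ukasiewicz block $[p,q]$ with $p>0$ that produces an off-diagonal discontinuity: for $x\in(p,q)$ and $y<p$ one has $x\to y=y$ (by the remark after Proposition \ref{idempotent}), whereas $x\to p=p+q-x>p$, so $x\to-$ jumps at $y=p$ and $(4)$ fails at $(x,p)$. For a product block $[p,q]$ the same boundary computation gives $x\to p=p$, so there is no jump. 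The paper's $(2)\Rightarrow(3)$ exploits exactly this: assuming a \L ukasiewicz block $[p,q]$ with $p>0$, the forward Cauchy weight $\phi=\bigvee_{r<p}\sy(r)$ witnesses failure of continuity of $([0,1],d_L)$.

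There is a second, independent gap in your route $(5)\Rightarrow(1)$ via Proposition \ref{cond impl cont}. You assert that Scott continuity of $p\to-$ on $[0,p)$ together with $p\to q=1$ for $q\geq p$ gives Scott continuity on all of $[0,1]$; but this ignores the boundary case of a directed $D\subseteq[0,p)$ with $\bigvee D=p$, where one would need $\bigvee_{d\in D}(p\to d)=1$. For the G\"odel t-norm (which satisfies $(3)$ vacuously, hence all of $(1)$--$(7)$) one has $p\to d=d$ for $d<p$, so this fails and $p\multimap-$ on $\CP\star\cong([0,1],d_L)$ is \emph{not} Scott continuous; Proposition \ref{cond impl cont} simply does not apply. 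The paper closes the cycle differently, proving $(5)\Rightarrow(6)$ by showing directly that $\CC A$ is closed in $\CP A$ under cotensors: the hypothesis $(5)$ is invoked only when $\phi(x)<p$, so that the directed join $\bigvee_{y\in\Gamma(\phi)}A(x,y)=\phi(x)$ stays strictly below $p$, while the case $\phi(x)\geq p$ is handled separately.
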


\begin{proof}
$(1)\Rightarrow(2)$  Obvious.

$(2)\Rightarrow(3)$
By Proposition \ref{t in Q}, if the $\sQ$-category $([0,1],d_L)$ is continuous, then  for each $x\in[0,1]$ and each forward Cauchy weight $\phi$ of $([0,1],d_L)$, \[x\ra\sup\phi=\bw_{y\ll x}\phi(y).\] Now, suppose on the contrary that  there exist idempotent elements $p,q>0$  such that $([p,q],\with ,q)$ is isomorphic to $([0,1],\with_{\L},1)$. Let $\phi$ be the forward Cauchy weight $\bv_{r<p}\sy(r)$. Then for all $x\in(p,q)$, \begin{align*}\bw_{y\ll x}\phi(y)& =\bw_{y< x}\bv_{r<p}(y\ra r) &(x>0)\\ & =\bw_{p\leq y< x}\bv_{r<p}(y\ra r)&(\text{$y\ra r=1$ whenever $y\leq r$})\\ &=\bv_{r<p}r &(\text{$y\ra r=r$ since}~r<p\leq y) \\ &=p.\end{align*} But, since $([p,q],\with ,q)$ is isomorphic to $([0,1],\with_{\L},1)$, it follows that
\[x\ra\sup\phi=x\ra p>p \] whenever $p<x<q$, a contradiction.

$(3)\Rightarrow(4)$ Routine verification.

$(4)\Rightarrow(5)$ Trivial.

$(5)\Rightarrow(6)$ It suffices to show that for every  complete $\sQ$-category $A$,  $\CC A$ is closed in $\CP A$ under meets and cotensors. That $\CC A$ is closed in $\CP A$ under meets is ensured by Lemma \ref{CA closed under meets}. To see that $\CC A$ is closed in $\CP A$ under cotensors, for $p\in[0,1]$ and $\phi\in\CC A$, set \[D:=\{d\in A\mid p\leq\phi(d)\}.\] Then $D$ is    a directed set of  $(A,\pleq)$. We claim that  $\{p\multimap y\mid y\in \Ga(\phi)\}\subseteq D$. In fact, since $\phi=\bv_{z\in\Gamma(\phi)}A(-,z)$, it follows that for all $y\in \Ga(\phi)$,
\begin{align*}p\ra\phi(p\multimap y)&=p\ra\bv_{z\in \Ga(\phi)} A(p\multimap y,z)\\ &\geq p\ra A(p\multimap y,y)\\ &= A(p\multimap y,p\multimap y)\\ &=1,\end{align*}
hence $p\multimap y\in D$.

Let \[\rho:=\bv_{d\in D} A(-,d).\] Since $\rho$ is a forward Cauchy weight,   it suffices to show that $p\ra\phi=\rho$. That $\rho\leq p\ra\phi$ is clear. It remains to check that $p\ra\phi(x)\leq\rho(x)$ for all $x\in A$. If $p\leq\phi(x)$, then  $x\in D$ and \[  \rho(x)=\bv_{d\in D} A(x,d)\geq A(x,  x) =1.\]
If  $p>\phi(x)$, then \begin{align*}p\ra\phi(x)&=p\ra\bv_{y\in \Ga(\phi)} A(x,y)\\ &
 =\bv_{y\in \Ga(\phi)}(p\ra A(x,y)) &\big(\bv_{y\in \Ga(\phi)} A(x,y)=\phi(x)<p\big) \\ &
 =\bv_{y\in \Ga(\phi)} A(x,p\multimap y) \\ &
 \leq\bv_{d\in D} A(x,d)\\ &
 =\rho(x).
\end{align*}

$(6)\Rightarrow(7)$  Lemma \ref{comp mond}.

$(7)\Rightarrow(1)$ Theorem \ref{main1}.
\end{proof}

By the ordinal sum decomposition theorem, the map $\&:[0,1]^2\to[0,1]$, given by \[x\with y:=\begin{cases}1/2+ \max\{x+y-3/2,0\},  & (x,y)\in [1/2,1]^2,\\
\min\{x,y\}, & {\rm otherwise}, \end{cases}\] is a continuous t-norm. Since the restriction of $\&$ on $[1/2,1]$ is isomorphic to the \L ukasiewicz t-norm, it follows that for the quantale $\sQ=([0,1],\&,1)$,  the completely distributive $\sQ$-category $([0,1], d_L)$ is not continuous.

\begin{rem} A continuous t-norm $\with $ is said to be Archimedean if it has no idempotent elements other than $0$ and $1$ \cite{Klement2000}. It is known that an Archimedean continuous t-norm is isomorphic to either the product t-norm or the \L ukasiewicz t-norm. If $\sQ=([0,1],\with,1)$ with $\with $ being a continuous Archimedean t-norm, then the converse conclusion of Proposition \ref{cond impl cont} is also true. That means,   if a complete $\sQ$-category $A$ is continuous, then for each $p\in[0,1]$, the map \[p\multimap-:(A,\pleq)\to(A,\pleq)\] is Scott continuous. Given a directed set $D$ of  $(A,\pleq)$, let \[\phi:=\bv_{d\in D} A(-,d).\] Since $\with $ is Archimedean,  the map \[p\ra-:[0,1]\to[0,1]\] is continuous  for all $p\in[0,1]$. It follows from the argument of $(5)\Rightarrow(6)$ in Theorem \ref{main} that $p\ra\phi\in\CC A$ and it is the cotensor of $p$ with $\phi$ in $\CC A$. Therefore,    \begin{align*}  p\multimap\bv D &=p\multimap{\sup}_A\phi\\  &={\sup}_A (p\ra\phi)  & \mbox{(${\sup}_A$ is a right adjoint)}\\  &={\sup}_A\bv_{d\in D}(p\ra  A(-,d)) & \mbox{($p\ra-$ is continuous)}\\  &={\sup}_A\bv_{d\in D}  A(-,p\multimap d) &(\text{definition of cotensor})\\  &=\bv_{d\in D} p\multimap d. \end{align*} \end{rem}




\begin{thebibliography}{10} 
\bibitem{Adamek2003}
J.~Ad\'{a}mek, F.~W. Lawvere,   J.~Rosick\'{y},
 Continuous categories revisited,
 {\em Theory  and Applications of Categories}  11:   252--282, 2003.
\bibitem{AK} M.~H. Albert,   G.~M. Kelly, The closure of a class of colimits, {\em Journal Pure and Applied Algebra}   51: 1--17, 1998.


\bibitem{Bonsangue1998}
M.~Bonsangue, F.~van Breugel,   J.~Rutten,
 Generalized metric spaces: Completion, topology, and powerdomains via   the Yoneda embedding,
 {\em Theoretical Computer Science}  193: 1--51, 1998.

\bibitem{Borceux1994a}
F.~Borceux,
 {\em Handbook of Categorical Algebra, Volume~2},
 Cambridge University Press, 1994.

\bibitem{FW1990} B. Fawcett,   R.~J. Wood, Constructive complete distributivity, {\em Mathematical Proceedings of the Cambridge Philosophical Society}  107: 81--89,  1990.

\bibitem{Flagg1996}
R.~C. Flagg, P.~S{\"u}nderhauf,   K.~R. Wagner,
 A logical approach to quantitative domain theory,
 Topology Atlas, Preprint 23, 1996.

\bibitem{Gierz2003} G. Gierz, K.~H. Hofmann, K. Keimel, J.~D. Lawson, M. Mislove,    D. S. Scott, {\em Continuous Lattices and Domains},   volume 93 of Encyclopedia   of Mathematics and its Applications, Cambridge University Press, Cambridge, 2003.

\bibitem{Hof2013} D. Hofmann, Duality for distributive spaces, {\em Theory and Applications of Categories}  28: 66-122, 2013.

\bibitem{Monoidal top} D. Hofmann, G.~J. Seal,   W. Tholen  (editors), {\em Monoidal Topology: A Categorical
Approach to Order, Metric, and Topology}, volume 153 of Encyclopedia of Mathematics
and its Applications, Cambridge University Press, Cambridge, 2014.

\bibitem{Hofmann2011a} D.~Hofmann,   P.~Waszkiewicz,  Approximation in quantale-enriched categories, {\em Topology and its Applications} 158: 963--977, 2011.

\bibitem{Hofmann2012} D.~Hofmann, P.~Waszkiewicz,  A duality of quantale-enriched categories, {\em Journal of Pure and Applied Algebra}  216: 1866--1878, 2012.

\bibitem{Johnstone1982} P. Johnstone, {\em Stone Spaces}, Cambridge University Press, 1982.

\bibitem{Johnstone-Joyal}
P.~Johnstone,  A.~Joyal,
 Continuous categories and exponentiable toposes,
 {\em Journal of Pure and Applied Algebra}  25: 255--296, 1982.


\bibitem{KS2005} G. M. Kelly,   V. Schmitt, Notes on enriched categories with colimits of some class, {\em Theory and Applications of Categories}  14: 399-423,  2005.

\bibitem{Klement2000}
E.~P. Klement, R.~Mesiar,   E.~Pap,
 {\em Triangular Norms}, volume~8 of   Trends in Logic,
 Springer, Dordrecht, 2000.

\bibitem{Kock1995}
A.~Kock,
 Monads for which structures are adjoint to units,
 {\em Journal of Pure and Applied Algebra}  104: 41--59, 1995.

\bibitem{KW2011} M. Kostanek,   P. Waszkiewicz, The formal ball model for $\mathcal{L}$-categories, {\em Mathematical Structures in Computer Science}  21:  41--64, 2011.

\bibitem{Lai2018}
H.~Lai, L.~Shen,
 Regularity vs. constructive complete (co)distributivity,
 {\em Theory and Applications of Categories}  33: 492--522, 2018.

\bibitem{Lai2017} H.~Lai, L.~Shen,   W.~Tholen,
 Lax distributive laws for topology  II,
 {\em Theory and Applications of Categories}  32: 736--768, 2017.


\bibitem{Lai2007}
H.~Lai, D.~Zhang,
 Complete and directed complete {$\Omega$}-categories,
 {\em Theoretical Computer Science}  388: 1--25, 2007.

\bibitem{Lawvere1973}
F.~W. Lawvere,
 Metric spaces, generalized logic and closed categories,
 {\em Rendiconti del Seminario Mat\'{e}matico e Fisico di Milano}
  XLIII: 135--166, 1973.

\bibitem{Lucyshyn-Wright2012}
R.~B. Lucyshyn-Wright,
 Totally distributive toposes,
 {\em Journal of Pure and Applied Algebra}  216: 2425--2431,
  2012.

\bibitem{Marmolejo2012}
F.~Marmolejo, R.~Rosebrugh,   R.~Wood,
 Completely and totally distributive categories,
 {\em Journal of Pure and Applied Algebra}  216: 1775--1790, 2012.



\bibitem{Mostert1957} P.~S. Mostert,   A.~L. Shields, On the structure of semigroups on a compact manifold with boundary, {\em Annals of Mathematics},  65:  117--143, 1957.	
\bibitem{PZ15}Q. Pu,   D. Zhang, Categories enriched over a quantaloid: Algebras, {\em Theory and Applications of Categories}  30:  751--774, 2015.

\bibitem{Rosenthal1990}
K.~I. Rosenthal,
 {\em Quantales and their Applications}, volume 234 of   Pitman
  research notes in mathematics series,
 Longman, Harlow, 1990.
\bibitem{Stubbe2005}
I.~Stubbe,
 Categorical structures enriched in a quantaloid: categories,
  distributors and functors,
 {\em Theory and Applications of Categories}  14: 1--45, 2005.

\bibitem{Stubbe2006} I. Stubbe, Categorical structures enriched in a quantaloid: tensored and cotensored categories, {\em Theory and Applications of Categories}  16: 283--306, 2006.

\bibitem{Stubbe2007}
I.~Stubbe,
 Towards ``dynamic domains'': Totally continuous cocomplete {${\cal
  Q}$}-categories,
 {\em Theoretical Computer Science}  373: 142--160, 2007.
\bibitem{Stubbe2010} I. Stubbe,   ``Hausdorff distance'' via conical cocompletion, {\em Cahiers de Topologie et G\'{e}om\'{e}trie Diff\'{e}rentielle Cat\'{e}goriques}  51: 51--76, 2010.
\bibitem{Stubbe2017}
I.~Stubbe,
 The double power monad is the composite power monad,
 {\em Fuzzy Sets and Systems}  313: 25--42, 2017.

\bibitem{Wagner1994}
K.~R. Wagner,
 {\em Solving Recursive Domain Equations with Enriched Categories},
 PhD thesis, Carnegie Mellon University, Pittsburgh, 1994.

\bibitem{Wagner1997}
K.~R. Wagner,
 Liminf convergence in {$\Omega$-categories},
 {\em Theoretical Computer Science}  184: 61--104, 1997.

\bibitem{Wood} R. J. Wood, Ordered sets via adjunctions, in: {\em Categorical Foundations}, volume 97 of Encyclopedia of Mathematics and its Applications, pp. 5--47, Cambridge University Press, Cambridge, 2004.

\bibitem{Zo76} V. Z\"{o}berlein, Doctrines on $2$-categories, {\em Mathematische Zeitschrift}  148: 267--279,  1976.

\end{thebibliography}
\end{document}